\documentclass[10pt]{amsproc}

\usepackage{subfig}
\usepackage{graphicx}

\usepackage{tikz}
\usetikzlibrary{calc}

\usepackage{amsmath}
\usepackage{amsthm}
\usepackage{amsfonts}
\usepackage{amssymb}
\usepackage{bigints}
\usepackage{mathtools}
\usepackage{microtype}
\usepackage{float}
\usepackage{verbatim}
\usepackage{url}
\usepackage{sidecap}
\usepackage[font={small}]{caption}

\setcounter{topnumber}{8}
\setcounter{bottomnumber}{8}
\setcounter{totalnumber}{8}

\DeclarePairedDelimiter\abs{\lvert}{\rvert}%
\DeclarePairedDelimiter\norm{\lVert}{\rVert}%

\makeatletter
\let\oldabs\abs
\def\abs{\@ifstar{\oldabs}{\oldabs*}}
\let\oldnorm\norm
\def\norm{\@ifstar{\oldnorm}{\oldnorm*}}
\makeatother

\newcommand{\sign}{\text{sgn}}

\copyrightinfo{}{}

\newtheorem{theorem}{Theorem}[section]
\newtheorem{lemma}[theorem]{Lemma}
\newtheorem{prep}[theorem]{Proposition}
\newtheorem{cor}{Corollary}

\theoremstyle{definition}

\theoremstyle{remark}
\newtheorem{remark}[theorem]{Remark}
\newtheorem{obs}[theorem]{Observation}

\theoremstyle{definition}

\numberwithin{equation}{section}

\begin{document}

\title[Propagation of curved folding]{Propagation of curved folding: The folded annulus with multiple creases exists}

\author[L. Alese]{Leonardo Alese}

\address{TU Graz \\ Department of Mathematics \\ Institute of Geometry \\
Kopernikusgasse 24 \\ 8010 Graz \\ Austria} 

\email{alese@tugraz.at}

 \subjclass[2010]{Primary: 	53A05.}

 \keywords{origami, curved folding, circular pleat, folded annulus.} 

\begin{abstract} 
In this paper we consider developable surfaces which are isometric to planar domains and which are piecewise differentiable, exhibiting folds along curves. The paper revolves around the longstanding problem of
existence of the so-called folded annulus with multiple creases, which we partially settle by building upon a deeper understanding of how a curved fold propagates to additional prescribed foldlines. After recalling some crucial properties of developables, we describe the local behaviour of curved folding employing normal curvature and relative torsion as parameters and then compute the very general relation between such geometric descriptors at consecutive folds, obtaining novel formulae enjoying a nice degree of symmetry. We make use of these formulae to prove that any proper fold can be propagated to an arbitrary finite number of rescaled copies of the first foldline and to give reasons why problems involving infinitely many foldlines are harder to solve. 
\end{abstract}

\maketitle

\section{Introduction} In recent years growing attention has been paid to the field of mathematical origami. The process of folding paper with the intent of crafting objects of art dates back to ancient China and Japan; although the earliest hard evidence of such an exercise is from the 16th century, it is possible that paper folding has been already practiced shortly after paper arrived in Japan via Buddhist monks in the 6th century \cite{lang1988}. As objects of combinatorics and kinematics, origami have been studied by many authors over a broad and diverse literature \cite{linkages}, \cite{turner}.

Moving from the seminal paper \cite{huffman1976}, the scientific community has also investigated the differential geometry of origami obtained by folding along curves, rather than straight lines. This is due not just to a theoretical interest but also to the role that surfaces obtainable by bending a flat foil (developables) have acquired in the interdependent fields of design, manufacturing and architecture in recent years \cite{review}, \cite{kilian}, \cite{freeform}, \cite{corinthians}, \cite{cladding}, \cite{shelden}.     

Even if the local geometry of folding along a single curve is well understood \cite{more}, the case of a nontrivial pattern of foldlines is challenging and may require ad hoc solutions \cite{demainelens} or numerical optimization \cite{2019curve}. The main intention of the present paper is to approach the propagation of a curved fold to the next prescribed foldline from a broad perspective, highlighting the role played by the regression curve of developables and providing formulae that describe the phenomenon in its full generality and complexity but that can still be employed to get new insights on its specificities. Also, we want to address the well known problem concerning the foldability of patterns involving concentric closed convex foldlines and contribute to the issue raised at the very end of \cite{nonexistence}:
\begin{quote}
\dots\ we conjecture that the circular pleat indeed folds, and that so too does any similar crease pattern consisting of a concentric series of convex smooth curves. Unfortunately a proof remains elusive. Such a proof would be the first proof to our knowledge of the existence of any curved-crease origami model, beyond the local neighborhood of a single crease.
\end{quote}
Some existence results were obtained \cite{demainelens} but to the knowledge of the author no progress has been made in constructing examples of folds along multiple concentric curves. We here finally provide explicit instances of such a kind (Fig.\ \ref{hypar3povray}, \ref{torus4povray}), present arguments that guarantee the existence of folds involving any finite number of concentric foldlines and give reasons why the proof still remains \textit{elusive} when it comes to patterns with infinitely many foldlines. We want to stress that in this paper we tackle the curved folding subject from the perspective of isometric maps, without addressing the issue of continuous deformations, which is nevertheless another interesting and relevant topic. In our setting, folds as the one in Fig.\ \ref{torus4povray} are legitimate while they would not be possible if one requires the existence of a continuous deformation: in our example the linking number of any two curves bounding a developable strip, which is invariant under isotopy, is different from the linking number of two concentric circles \cite{rolfsen}.       

\begin{figure}[ht!]
\scalebox{0.9}{\includegraphics{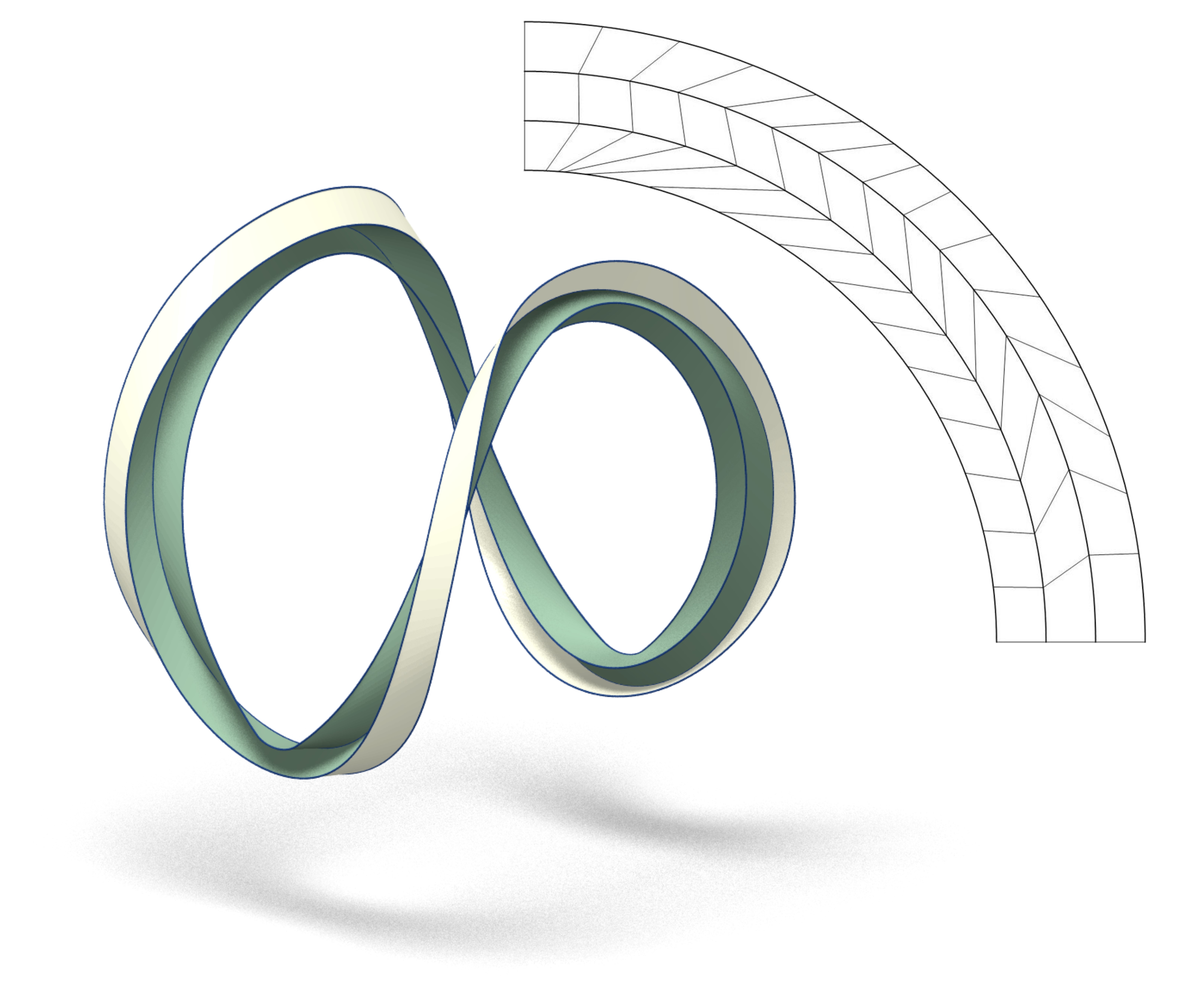}} \caption{Fold along two concentric circles of an annulus with inner radius $0.905$ and outer radius $1.19$. The two inner developables (green) are obtained by extending the isometry between the unit circle and the rescaled intersection of the unit sphere with the hyperbolic paraboloid of equation $z-3 xy=0$. The outer developable (white) is induced once the second concentric foldline is prescribed. On the right, we show the ruled structure of one of the circular sectors of the annulus (equivalent up to reflection). } \label{hypar3povray}  
\end{figure} 

\begin{figure}[ht!]
\scalebox{0.8}{\includegraphics{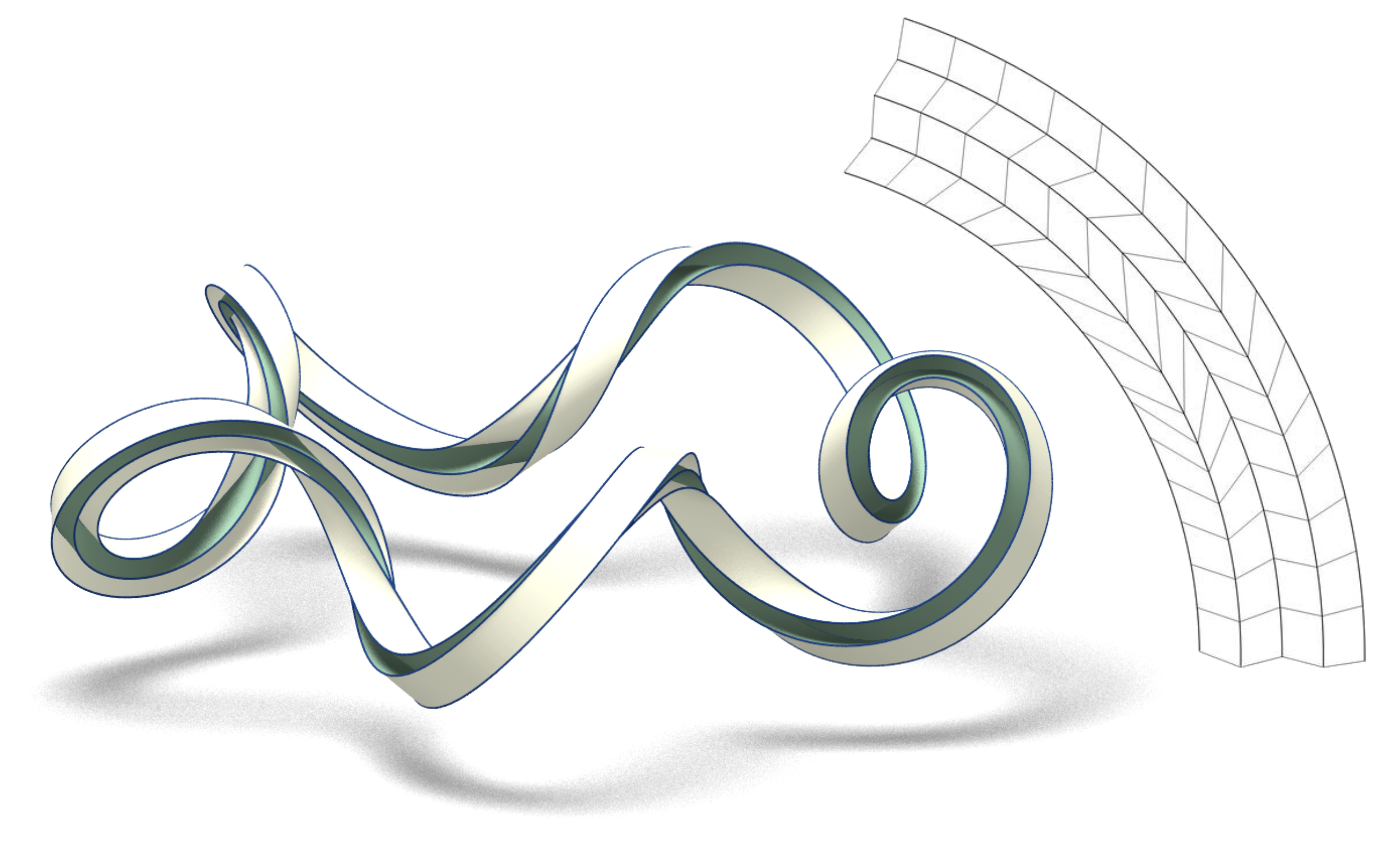}} \caption{Fold along three concentric circles of an annulus with inner radius $0.86$ and outer radius $1.14$. The two middle developables (green) are obtained by extending the isometry between the unit circle and the toroidal unknot $\omega_{3,(9,2)}$ (see \S \ref{localconvex}). The outer and the inner developables (white) are induced once two additional concentric foldlines are prescribed. On the right, we show the ruled structure of one of the circular sectors of the annulus (equivalent up to rotation).} \label{torus4povray}  
\end{figure} 

As for an outline of the content, \S \ref{recall} settles the notation about some natural geometric descriptors for curves and surfaces of Euclidean space and recalls how a surface isometric to the plane admits a ruled parametrization. In \S \ref{local} we describe how paper locally folds along a curve by discussing its behaviour in terms of the normal curvature and relative torsion of the ridge; the degree of symmetry of the formulae obtained points out how such parameters are to some extent the natural ones to describe the problem. In \S \ref{localconvex}, two methods for folding along a circle and, more in general, along a closed convex curve are described. In \S \ref{secprop1}, the formulae describing the relations between two consecutive curved foldlines are presented. In \S \ref{secprop2} we prove that any fold along one foldline can be propagated to any number of rescaled copies of itself, if the scaling factor is small enough. Finally, in \S \ref{infinite} we discuss how the propagation of a fold can turn singular in an arbitrarily abrupt manner, implying that an existence proof of foldability on any pattern with infinitely many prescribed foldlines must involve a control mechanism on the derivative of all orders. The appendix contains a more thorough discussion of the examples in Fig.\ \ref{hypar3povray}, \ref{torus4povray}, employing the formulae from \S \ref{secprop1} to make apparent the regularity of the developables involved.

\section{Space curves and parabolic developables} \label{recall}
If $\gamma$ is an arc-length parametrized $C^3$ curve, denoting the derivative with respect to the arc-length parameter with a prime, we define the Frenet frame of $\gamma$ as the triple of orthonormal vectors $\{T,N,B\}:=\Big\{\gamma',\frac{\gamma''}{\|\gamma''\|},\gamma' \times \frac{\gamma''}{\|\gamma''\|} \Big\}$. At the same time, if the curve is known to be lying on a surface of $\mathbb{R}^3$ whose unit normal at $\gamma(s)$ is $n(s)$, we can define also the Darboux frame as $\{T,u,n\}:=\{\gamma',n \times \gamma' , n\}$. The coefficients that express the first derivative of such bases with respect to the basis itself have significant geometric meanings,
$$
\begin{pmatrix}
T' \\
N' \\
B' 
\end{pmatrix} =\begin{pmatrix}
0 & k & 0 \\
-k & 0  & \tau \\
0 & -\tau  & 0 
\end{pmatrix}
\begin{pmatrix}
T \\
N \\
B 
\end{pmatrix}, \;\;\;\;\;
\begin{pmatrix}
T' \\
u' \\
n' 
\end{pmatrix} =\begin{pmatrix}
0 & k_g & k_n \\
-k_g & 0  & \tau_r \\
-k_n & -\tau_r  & 0 
\end{pmatrix}
\begin{pmatrix}
T \\
u \\
n 
\end{pmatrix}.
$$
The nonnegative function $k$ is called \textit{curvature} of the curve. The \textit{geodesic curvature} $k_g$ with respect to the given surface is the length of the projection of the \textit{curvature vector} $k\cdot N$ to the \textit{tangent plane} of the surface, spanned by $T$ and $u$, and signed with respect to $u$. The \textit{normal curvature} $k_n$ is the signed length of the projection of the same curvature vector to the normal direction $n$. The function $\tau$ is called \textit{torsion} of the curve, while $\tau_r$ is the \textit{relative torsion} with respect to the given surface.

For our purposes it will be useful to express the geometric descriptors above as a function of the angle $\alpha$ between the \textit{osculating plane}, spanned by $T$ and $N$, and the tangent plane of the surface. We measure $\alpha$ anticlockwise by looking at the angle between $B$ and $n$ from the tip of $T$. Then
$$
k_g = \cos (\alpha) k, \;\;\;\;\;\; k_n=-\sin(\alpha)k, \;\;\;\;\;\; \tau_r=\tau+\alpha'. 
$$
For a more extensive treatment and additional insights about the quantities and formulae above the reader may refer to \cite[\S 1-5 and exercise 19 in \S 3-2]{docarmo}.

If a regular surface is of class $C^2$, we can compute at each point and in each tangent direction $v$ its \textit{normal curvature}, that is the curvature of the section obtained by intersecting the surface with the plane spanned by the tangent $v$ and the normal to the surface $n$. Varying $v$, we call \textit{principal curvatures} the maximum $k_1$ and the minimum $k_2$ among the normal curvatures. The product $K:=k_1\cdot k_2$ is the \textit{Gaussian curvature} of the point. The Theorema Egregium by Gauss guarantees that its value is preserved under $C^2$ isometries (\cite[pp.\ 759--760]{onthefund} or \cite[\S 4-3]{docarmo} assuming $C^3$ regularity). A surface that locally can be obtained as image of a planar domain by a $C^h$ isometry is called a $C^h$ \textit{developable}. If $h\geq2$, because of the invariance just discussed, its Gaussian curvature must be everywhere $0$; we call a point of such a surface \textit{parabolic} if the two principal curvatures satisfy (up to relabelling) $k_1 \neq k_2 = 0$ and \textit{flat} if instead $k_1=k_2=0$. 
 
If parabolic points are dense on the surface, it can be shown that a unique straight line (a \textit{ruling}) passes through any of its points and that the tangent plane of the surface along this line is constant. In the rest of the paper we will be interested in developable surfaces that have only parabolic points; in this case its representation as a family of rulings (\textit{ruled parametrization}) enjoys useful regularity properties. 

\begin{theorem}{\cite[pp. 769-770]{onthefund}} \label{thmruling}
Let $S$ be a $C^h$ developable surface with $h \geq 2$ and no flat points. For any point $p\in S$ there exist a $C^h$ arc-length parametrized curve $\gamma: [-\varepsilon,\varepsilon] \rightarrow \mathbb{R}^3$ and a $C^{h-1}$ function $r: [-\varepsilon,\varepsilon] \rightarrow \mathbb{R}^3$, with $\|r\|\equiv 1$, such that, in a neighbourhood of $p$, $S$ can be parametrized as $a(u,t)=\gamma(u)+t\cdot r(u)$. Moreover, fixed $\bar{u}$, the tangent plane of the surface is constant along the ruling $a(\bar{u},t)$.
\end{theorem}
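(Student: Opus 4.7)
Since every point of $S$ is parabolic with $k_1\neq k_2=0$ and $S$ has no flat points, the Weingarten map $dn$ has rank exactly one at every point, and its kernel defines a continuous line field $\mathcal{N}$ on $S$ (the asymptotic/null direction), along which the second fundamental form vanishes. The proof proceeds in three steps: (i) integral curves of $\mathcal{N}$ are open segments of straight lines in $\mathbb{R}^{3}$ along which the unit normal $n$, and hence the tangent plane, is constant; (ii) given any $C^{h}$ arc-length curve $\gamma$ through $p$ transverse to $\mathcal{N}_{p}$ and any continuous unit section $r(u)$ of $\mathcal{N}$ along $\gamma$, the map $a(u,t)=\gamma(u)+t\,r(u)$ is a local parametrization of $S$ near $p$; (iii) one can choose $\gamma$ and $r$ so that the regularity $\gamma\in C^{h}$, $r\in C^{h-1}$ claimed in the statement holds.

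\textbf{Step (i): rulings are line segments.} Work in local principal coordinates $X(u,v)$ around $p$, which exist because $k_1\neq k_2$ rules out umbilics; arrange things so that $X_v$ points along $\mathcal{N}$, $X_u$ along the other principal direction, and $\lVert X_v\rVert\equiv 1$ (reparametrizing in $v$ by arc length). Rodrigues' formula gives $n_v=-k_2X_v=0$, so $n$ is independent of $v$. Differentiating $\langle X_v,X_v\rangle=1$, $\langle X_u,X_v\rangle=0$ and $\langle X_v,n\rangle=0$ with respect to $v$, and using $\langle X_{uv},X_v\rangle=\tfrac12\partial_u\langle X_v,X_v\rangle=0$ together with $n_v=0$, one finds that $X_{vv}$ is orthogonal to each of $X_u$, $X_v$, $n$, hence $X_{vv}\equiv 0$. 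Integrating twice yields $X(u,v)=X(u,0)+v\,X_v(u,0)$, so every $v$-curve is a straight line along which the tangent plane is constant. This establishes the ruled structure of $S$ and the constancy of the tangent plane along each ruling.

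\textbf{Steps (ii) and (iii): parametrization and regularity.} Once (i) is known, pick any $C^{h}$ arc-length curve $\gamma$ on $S$ through $p$ whose velocity at $0$ is transverse to $\mathcal{N}_{p}$; such a $\gamma$ exists (e.g.\ as a suitably chosen coordinate line of a $C^{h}$ chart of $S$, reparametrized by arc length, which preserves $C^{h}$ regularity since the speed is nonvanishing). Since the Weingarten map is self-adjoint with kernel $\mathcal{N}$, the null direction at $\gamma(u)$ is the line in $T_{\gamma(u)}S$ orthogonal to the image $(n\circ\gamma)'(u)=dn_{\gamma(u)}(\gamma'(u))$; by transversality $(n\circ\gamma)'$ is nowhere zero, so
\[
r(u):=\frac{n(\gamma(u))\times(n\circ\gamma)'(u)}{\bigl\lVert n(\gamma(u))\times(n\circ\gamma)'(u)\bigr\rVert}
\]
is a well-defined $C^{h-1}$ unit section of $\mathcal{N}$ along $\gamma$ (since $n\in C^{h-1}$ and $\gamma\in C^{h}$). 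By (i) each ruling $\{\gamma(u)+t\,r(u):t\in\mathbb{R}\}$ lies in $S$ as long as it remains in the domain, and the Jacobian of $a$ at $(0,0)$ is (up to sign) the triple product $\det(\gamma'(0),r(0),n(p))$, which is nonzero because $\gamma'(0)$ and $r(0)$ are linearly independent tangent vectors and $n(p)$ is normal to them. The inverse function theorem then gives a neighbourhood on which $a$ is a local diffeomorphism onto an open piece of $S$ containing $p$, and the constancy of the tangent plane along each ruling is built into the construction since $n\circ a(u,\cdot)$ is constant in $t$.

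\textbf{Main obstacle.} The geometric content (rulings exist and are lines) is the standard computation above; the delicate point is the regularity bookkeeping. A naive approach via integrating principal direction fields yields only $C^{h-1}$ data because the second fundamental form is only $C^{h-2}$. The trick that makes the count come out right is the freedom to pick the base curve $\gamma$ transversely as a $C^{h}$ object unrelated to the principal foliation, and then to recover $r$ through the cross-product formula above, which uses only $n\circ\gamma$ and its first derivative and therefore loses exactly one order of regularity rather than two.
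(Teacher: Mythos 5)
The paper offers no proof of this statement: it is imported verbatim from Hartman--Wintner \cite{onthefund}, precisely because the low-regularity cases $h=2,3$ that the citation covers are delicate. Your argument is the standard smooth-category one, and it fails to deliver the theorem as stated at two identifiable points. The decisive one is the regularity of $r$. The Gauss map $n$ of a $C^h$ surface is only $C^{h-1}$, so $n\circ\gamma$ is $C^{h-1}$ and $(n\circ\gamma)'$ is $C^{h-2}$; your formula
\[
r(u)=\frac{n(\gamma(u))\times(n\circ\gamma)'(u)}{\lVert n(\gamma(u))\times(n\circ\gamma)'(u)\rVert}
\]
therefore produces a $C^{h-2}$ field, not a $C^{h-1}$ one. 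Your closing paragraph miscounts: passing from $\gamma$ to $n\circ\gamma$ already costs one derivative, and differentiating costs a second, so this construction loses two orders, exactly the ``naive'' count you set out to beat. For $h=2$ the resulting $r$ is merely continuous, $a_u=\gamma'+t\,r'$ need not exist, and the inverse function theorem step collapses. The assertion that $r$ can be taken $C^{h-1}$ --- one order better than any pointwise kernel formula gives --- is the actual content of the theorem and needs a different mechanism (e.g.\ recovering $r(u)$ as the normalized chord between the intersections of the straight ruling with two transversal $C^h$ curves, which is how one exploits that the rulings are genuinely straight segments).

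The second gap is in Step (i). Principal coordinates in which $X_{vv}$ can be computed require the straightened chart to be $C^2$; but the principal direction fields of a $C^h$ surface are only $C^{h-2}$ (the second fundamental form is $C^{h-2}$), so the rectified chart is a priori only $C^{h-2}$ and your computation is licensed only for $h\ge4$. For $h=2$ the kernel line field is merely continuous and cannot in general be straightened at all, and even showing that its integral curves are straight lines (rather than just curves lying in the affine tangent plane, which is all that $\langle c',n_0\rangle=0$ gives) requires an argument that does not differentiate the chart twice in the ruling direction. In short: your proof is essentially correct for sufficiently smooth surfaces, but for the range $h\ge2$ claimed in the statement both the existence of the ruled structure and, above all, the $C^{h-1}$ regularity of $r$ remain unproved by this route.
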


Before moving to the next section we recall the elementary fact at the core of the local geometry of curved folding.

\begin{lemma}{\cite[\S 4-2]{docarmo}} \label{geodesic}
The geodesic curvature of a planar curve is preserved under isometries of the planar domain in which it is contained. 
\end{lemma}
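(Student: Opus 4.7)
The plan is to show that the geodesic curvature $k_g$ is intrinsic, i.e.\ expressible purely in terms of the first fundamental form and the orientation of the ambient surface; invariance under an isometry is then immediate from the very definition of isometry.

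First I would fix a local parametrization $x(u^1,u^2)$ of the surface containing the arc-length parametrized curve $\gamma(s)=x(u^1(s),u^2(s))$, and split $\gamma''$ into its tangential component --- the covariant derivative $\nabla_T T$ --- and its normal component. The classical computation yields $\nabla_T T = (\ddot u^k + \Gamma^k_{ij}\,\dot u^i \dot u^j)\, x_k$, where the Christoffel symbols $\Gamma^k_{ij}$ are determined by the metric coefficients $g_{ij}=\langle x_i, x_j\rangle$ and their first derivatives. Next, starting from $k_g = \langle \nabla_T T,\, u\rangle$ with $u = n\times T$, I would observe that for any vector $v$ tangent to the surface one has $\langle v, u\rangle = \omega(T,v)$, where $\omega$ denotes the oriented area form on the tangent plane; since $\omega$ is expressible through the $g_{ij}$'s and the orientation alone, so is this pairing.

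Putting the pieces together, $k_g$ becomes a function of the $g_{ij}$'s, their first derivatives, the quantities $\dot u^i$ and $\ddot u^i$, and the orientation. An isometry of a planar domain into the surface identifies, by definition, the two first fundamental forms in corresponding coordinates; if chosen compatibly with orientation it identifies $\omega$ as well, so the intrinsic formula returns the same number on either side. This yields $k_g(\gamma) = k_g(\phi\circ\gamma)$ for any isometry $\phi$, and in particular, with the domain side flat, the planar signed curvature on the left matches the geodesic curvature along the image curve on the right.

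The main point to verify carefully is that $\langle \cdot, u\rangle$ is genuinely intrinsic, which reduces to the routine fact that the two-dimensional oriented volume form on a tangent plane is expressible through the metric. All other ingredients --- $\nabla_T T$, the Christoffel symbols, the derivatives $\dot u^i, \ddot u^i$ --- are already visibly intrinsic, so once that small check is dispatched the lemma follows.
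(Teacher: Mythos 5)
Your argument is correct and coincides with the proof the paper delegates to the literature: the lemma is stated without proof, citing \cite[\S 4-2]{docarmo}, and the cited argument is exactly your observation that $k_g=\langle\nabla_T T,\,u\rangle$ is computable from the first fundamental form together with the orientation, via the Christoffel symbols and the intrinsic area form $\sqrt{\det g}\,du^1\wedge du^2$. The one point genuinely requiring care --- that the \emph{sign} of $k_g$ flips under orientation-reversing isometries, so the identification must be orientation-compatible --- you have already flagged and handled.
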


\section{Local curved folding} \label{local} 

In the following, \textit{folding along} a \textit{foldline}, which is a curve contained in an open domain of $\mathbb{R}^2$, means the isometric mapping of such a planar domain onto two $C^1$ \textit{good surfaces} (decomposable as a finite complex of $C^2$ regions joined by vertices and $C^2$ edges, \cite{nonexistence}) that meet with $C^0$ regularity (and not more) along the image of the foldline. With folding the foldline \textit{onto} a space curve we mean folding along the foldline in such a way that its image under the isometry is the given space curve, which we call the \textit{ridge}. A visualization of a curved fold about a point of a $C^2$ ridge is given in Fig.\ \ref{firstfolding}. In \cite{demainelens}, where local curved folding onto good surfaces is thoroughly studied, it is shown that, in order to construct an isometry on both sides of the foldine, the regularity of the ridge cannot be $C^1$ while not being $C^2$ so, unless the ridge is kinked, its Frenet frame is well-defined. In the rest of the paper we will be mainly interested in folding along foldlines and onto ridges whose regularity is at least $C^3$.

\begin{figure}
\begin{tikzpicture}
       \node[anchor=south west,inner sep=0] at (0,0) {\scalebox{0.7}{\includegraphics[width=\textwidth]{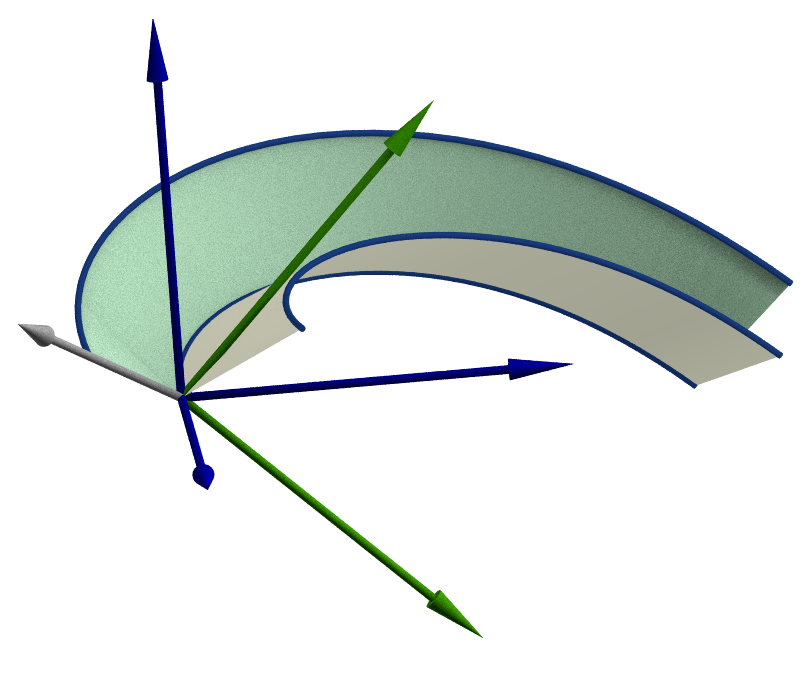}}};
       
       \pgfmathsetmacro{\SCALE}{1} 
    
       \node at ($\SCALE*(2.4,1.85)$) {$T$};
       \node at ($\SCALE*(6.4,3.15)$) {$N$};
       \node at ($\SCALE*(1.5,7.3)$) {$B$};
       \node at ($\SCALE*(3.26,6.52)$) {$\alpha_-$};
       \node at ($\SCALE*(5.05,6.3)$) {$n_-$};
       \node at ($\SCALE*(5.48,0.55)$) {$u_-$};
       \node at ($\SCALE*(0.06,3.62)$) {$r_-$};
        \node at ($\SCALE*(0.85,2.57)$) {$\beta_-$};
       \draw[thick,->] ($\SCALE*(1.9,7.1)$) arc (92:  51:3.8cm);
       \draw[thick,<-] ($\SCALE*(0.46,3.42)$) arc (220:  240:5.5cm);
       
        \node at ($\SCALE*(4.55,4.14)$) {$\gamma$};
\end{tikzpicture}
\caption{$\{T,N,B\}$ is the Frenet frame of the ridge, while $\{T,u_-,n_-\}$ its Darboux frame with respect to the outer green surface oriented by $n_-$. The tangent plane to the surface is spanned by $T$ and $u_-$ and the the ruling direction $r_-$ lies on it. The Darboux frame of the inner white surface can be obtained by simply rotating $\{T,u_-,n_-\}$ by $-2\alpha_-$ about $T$.} \label{firstfolding}
\end{figure}

In order to fix the notation and to explain why Fig.\ \ref{firstfolding} is substantially the only way a local curved fold can look like we recall a couple of formulae from \cite{more}. These describe how paper locally folds along a given $C^3$ foldline with curvature $k_g>0$ once a $C^3$ ridge is prescribed. We call $n_+$ the normal to the developable such that the angle $\alpha_+$ between the binormal vector $B$ of the Frenet frame of $\gamma$ and $n_+$, measured anticlockwise with respect to $T=\gamma'$, has value $0<\alpha_+<\frac{\pi}{2}$. Analogously, the normal $n_-$ and the angle $\alpha_-$ are defined for the other developable to satisfy $0>\alpha_->-\frac{\pi}{2}$.
Since geodesic curvature is preserved by Lemma \ref{geodesic}, denoting with $k$ the curvature of the ridge $\gamma$, we have 
$$
\cos (\alpha_+) k = k_g = \cos (\alpha_-) k, 
$$
and therefore $\alpha_+=-\alpha_-$ if, as in our definition of folding, the transition from one side to the other must be just $C^0$. More precisely, a fold is called \textit{proper} when the above relation is well defined and $\alpha_+\neq 0, \frac{\pi}{2}$ which is the case iff $k>k_g$ and $k_g\neq0$. For intuition, $\alpha_+ \neq 0$ ensures some folding is actually happening and $\alpha_+ \neq  \frac{\pi}{2}$ that the developables on the two sides do not overlap each other. 


\begin{lemma}{\cite{more}} \label{foldridge}
Given a $C^3$ foldline $\bar{\gamma}$ and a $C^3$ ridge of the same length $\gamma$ with curvatures $0<k_g<k$, then, on the two sides of the osculating plane of $\gamma$, two different proper folds are possible along $\bar{\gamma}$ onto $\gamma$ and the unit directions of the rulings of the developables are given by 
$$
r_S=\frac{\tau_{(r,S)} T -k_{(n,S)} \bigl(\cos (\alpha_{S}) N + \sin(\alpha_{S}) B\bigl)}{\sqrt{\tau_{(r,S)}^2+k_{(n,S)}^2}}, \;\; S\in \{+,-\},
$$
where $\{T,N,B\}$ is the Frenet frame of $\gamma$. The symbols $\tau_{(r,+)}$, $k_{(n,+)}$ denote the relative torsion resp. the normal curvature of $\gamma$ with respect to the developable whose normal $n_+$ forms with $B$ the angle $0<\alpha_+<\frac{\pi}{2}$, when measured anticlockwise with respect to $T$. Analogous notation is used for $\tau_{(r,-)}, k_{(n,-)}$, $-\frac{\pi}{2}<\alpha_-<0$ and $n_-$ for the second developable.
\end{lemma}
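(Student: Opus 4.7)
The plan is to derive the formula for $r_\pm$ from the constraint in Theorem \ref{thmruling} that the tangent plane is constant along each ruling, and then to run the construction in reverse to exhibit the two developables as ruled surfaces.

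First I would settle the two-fold alternative. By Lemma \ref{geodesic} the geodesic curvature of $\bar\gamma$ is carried onto $\gamma$ by any isometry, giving $k_g=\cos(\alpha)\,k$ on each side of the osculating plane. Since $0<k_g<k$, this admits exactly the two solutions $\alpha_+\in(0,\pi/2)$ and $\alpha_-=-\alpha_+\in(-\pi/2,0)$, corresponding to the two choices of unit normal $n_\pm$ mirroring across the osculating plane; these produce the two folds claimed, and the proper-fold hypotheses are encoded in $\alpha_\pm\ne 0,\pm\pi/2$.

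Next I would identify $r_\pm$ with the kernel of the shape operator $W_\pm$ of the corresponding developable along $\gamma$. By Theorem \ref{thmruling} the normal $n_\pm$ is constant along each ruling, so the ruling direction lies in $\ker W_\pm$. Because the surface is parabolic and $W_\pm$ is self-adjoint, $\ker W_\pm=(\operatorname{Im}W_\pm)^\perp$ inside the tangent plane; the Darboux equations read
\[
W_\pm(T)=-n_\pm'=k_{n,\pm}\,T+\tau_{r,\pm}\,u_\pm,
\]
which is nonzero because $k_{n,\pm}=-\sin(\alpha_\pm)\,k\neq 0$ for a proper fold. Rotating by a right angle inside the tangent plane therefore yields, up to the sign fixed by orientation,
\[
r_\pm=\frac{\tau_{r,\pm}\,T-k_{n,\pm}\,u_\pm}{\sqrt{\tau_{r,\pm}^{\,2}+k_{n,\pm}^{\,2}}}.
\]
Substituting $u_\pm=n_\pm\times T=\cos(\alpha_\pm)\,N+\sin(\alpha_\pm)\,B$, which follows by first rotating $\{N,B\}$ by $\alpha_\pm$ about $T$ to get $n_\pm=-\sin(\alpha_\pm)N+\cos(\alpha_\pm)B$ and then taking the cross product with $T$, reproduces the formula in the statement.

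Finally I would run the argument forward to obtain existence: with $r_\pm$ defined by the formula above (a $C^1$ vector field on $\gamma$, since $\gamma$ is $C^3$ and $\alpha_\pm$ inherits $C^1$ regularity through $\cos\alpha_\pm=k_g/k$), the ruled map $a(s,t)=\gamma(s)+t\,r_\pm(s)$ is a $C^1$ immersion for small $|t|$ whose tangent plane along each ruling is $\operatorname{span}\{T,u_\pm\}$ by construction, hence developable; gluing the two resulting strips along $\gamma$ produces both folds. The step I expect to be the main obstacle is this last verification—checking that each ruled strip is an isometric image of the planar half-strip adjacent to $\bar\gamma$ rather than merely a developable through $\gamma$ with the prescribed Darboux data—which reduces to computing the first fundamental form of $a(s,t)$ and matching it with the flat one via the identity $r_\pm\cdot T=\tau_{r,\pm}/\sqrt{\tau_{r,\pm}^{\,2}+k_{n,\pm}^{\,2}}$; the remainder is bookkeeping with the Frenet and Darboux relations.
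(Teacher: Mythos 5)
The paper does not actually prove this lemma: it is quoted from \cite{more} and used as a black box, so there is no internal proof to compare against. Judged on its own terms, your derivation of the displayed formula is correct. Identifying $r_S$ as the kernel direction of the rank-one, self-adjoint shape operator, hence orthogonal to $W(T)=k_{(n,S)}T+\tau_{(r,S)}u_S$ inside the tangent plane, and substituting $u_S=\cos(\alpha_S)N+\sin(\alpha_S)B$, reproduces the stated $r_S$; the dichotomy $\alpha_-=-\alpha_+$ from $k_g=\cos(\alpha)k$ is exactly the paper's own discussion preceding the lemma.

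The gap is in the existence half, which is the actual content of ``two different proper folds are possible.'' The kernel argument only shows what $r_S$ must be \emph{if} the developable exists; to build it you form $a(s,t)=\gamma(s)+t\,r_S(s)$ and must verify that this ruled strip is developable, i.e.\ that $\langle r_S',n_S\rangle=0$ (equivalently $\det(T,r_S,r_S')=0$), so that the tangent plane really is constant along rulings. You assert this holds ``by construction,'' but the construction only places $r_S$ in the tangent plane at $t=0$, and invoking the shape operator of a surface that has not yet been constructed is circular. The verification is a short Darboux-frame computation — writing $r_S=\cos(\beta_S)T+\sin(\beta_S)u_S$ gives $\langle r_S',n_S\rangle=\cos(\beta_S)k_{(n,S)}+\sin(\beta_S)\tau_{(r,S)}$, which vanishes precisely for your choice of $r_S$ — but it is the step that upgrades the formula from necessary to sufficient and cannot be skipped. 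The other deferred step (that the development of the strip is the planar half-strip adjacent to $\bar{\gamma}$) is handled more cleanly by Lemma \ref{geodesic} plus uniqueness of planar curves with prescribed curvature than by matching first fundamental forms. Finally, a regularity quibble: for a $C^3$ ridge, $\tau$ and hence $\tau_{(r,S)}$ and $r_S$ are in general only $C^0$, not $C^1$ as you claim, so the smoothness bookkeeping for the ruled parametrization needs more care than your parenthetical suggests.
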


The next lemma relates the normal curvature and relative torsion of a ridge with respect to one developable to the normal curvature and relative torsion of the same ridge with respect to the developable on the opposite side.

\begin{lemma} \label{fliptors}
In the notation of Lemma \ref{foldridge}, we have the equalities
\begin{equation*}
\begin{split}
k_{(n,S)} & = - k_{(n,\bar{S})}, \\
\tau_{(r,S)} & =\tau_{(r,{\bar{S}})}-2\alpha_{\bar{S}}' =  \tau_{(r,{\bar{S}})}-2\frac{k_g'k_{(n,{\bar{S}})}-k_gk_{(n,{\bar{S}})}'}{k_g^2+k_{(n,{\bar{S}})}^2},
\end{split}
\end{equation*}
where $S \in \{+,-\}$ and $\bar{S}$ is the opposite sign of $S$.
\end{lemma}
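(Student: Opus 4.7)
The plan is to combine three ingredients that are already in hand from the excerpt: the identities $k_n=-\sin(\alpha)k$ and $\tau_r=\tau+\alpha'$ that relate Frenet and Darboux data through the tilt angle $\alpha$, the relation $\cos(\alpha_+)k=k_g=\cos(\alpha_-)k$ coming from Lemma \ref{geodesic}, and the properness assumption pinning down $\alpha_+\in(0,\pi/2)$, $\alpha_-\in(-\pi/2,0)$, which together force $\alpha_+=-\alpha_-$. Both equalities in the lemma are then essentially a bookkeeping exercise on the angle $\alpha_S$.

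First I would derive the normal-curvature identity. Writing $k_{(n,S)}=-\sin(\alpha_S)k$ for $S\in\{+,-\}$ and invoking $\alpha_+=-\alpha_-$ immediately gives $\sin(\alpha_+)=-\sin(\alpha_-)$, hence $k_{(n,+)}=-k_{(n,-)}$, which is the first claim in symmetric form $k_{(n,S)}=-k_{(n,\bar S)}$.

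For the torsion identity I would start from $\tau_{(r,S)}=\tau+\alpha_S'$ where $\tau$ is the (intrinsic) torsion of $\gamma$ and does not depend on the chosen developable. Subtracting the two expressions and using $\alpha_+'=-\alpha_-'$ yields $\tau_{(r,S)}-\tau_{(r,\bar S)}=\alpha_S'-\alpha_{\bar S}'=-2\alpha_{\bar S}'$, which is the first form of the claimed formula. To pass to the second, closed form, I would eliminate $\alpha_{\bar S}$ in favour of the intrinsic quantities $k_g$ and $k_{(n,\bar S)}$: combining $\cos(\alpha_{\bar S})k=k_g$ and $\sin(\alpha_{\bar S})k=-k_{(n,\bar S)}$ gives $\tan(\alpha_{\bar S})=-k_{(n,\bar S)}/k_g$, so differentiating the $\arctan$ and simplifying produces
\[
\alpha_{\bar S}'=\frac{k_g'\,k_{(n,\bar S)}-k_g\,k_{(n,\bar S)}'}{k_g^2+k_{(n,\bar S)}^2},
\]
which plugged into the previous display yields the stated formula.

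There is essentially no hard step; the only thing that needs care is that all the quantities are well-defined and differentiable. This is exactly the content of the properness assumption: $k>k_g>0$ ensures $\alpha_{\bar S}\in(-\pi/2,\pi/2)\setminus\{0\}$ so that $\arctan$ is a legitimate inverse, and the $C^3$ regularity of both $\bar\gamma$ and $\gamma$ makes $k_g$ and $k_{(n,\bar S)}$ differentiable, so $\alpha_{\bar S}'$ exists and the differentiation above is justified. The only mildly subtle point is making sure the sign conventions (anticlockwise angle measured from the tip of $T$, with $n_+$ above and $n_-$ below the osculating plane) are applied consistently when writing $k_{(n,S)}=-\sin(\alpha_S)k$; beyond that, the lemma falls out of the relation $\alpha_+=-\alpha_-$.
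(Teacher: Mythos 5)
Your proposal is correct and follows essentially the same route as the paper: both reduce everything to the relation $\alpha_+=-\alpha_-$, obtain $\tau_{(r,S)}=\tau_{(r,\bar S)}-2\alpha_{\bar S}'$ from $\tau_{(r,S)}=\tau+\alpha_S'$, and then express $\alpha_{\bar S}'$ in terms of $k_g$ and $k_{(n,\bar S)}$. The only (cosmetic) difference is that you differentiate $\arctan(-k_{(n,\bar S)}/k_g)$ whereas the paper differentiates $\cos(\alpha_{\bar S})=k_g/\sqrt{k_g^2+k_{(n,\bar S)}^2}$; both yield the same quotient.
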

\begin{proof}
By direct computation, 
\begin{equation*}
\begin{split}
\tau_{(r,S)} & =\tau+\alpha_S'=\tau_{(r,\bar{S})}-\alpha_{\bar{S}}' -\alpha_{\bar{S}}'=\tau_{(r,{\bar{S}})}-2\alpha_{\bar{S}}' \\
& = \tau_{(r,{\bar{S}})}+2 \frac{(\cos(\alpha_{\bar{S}}))'}{\sin(\alpha_{\bar{S}})} = \tau_{(r,{\bar{S}})}-2 \frac{\bigg(k_g/\Big(\sqrt{k_g^2+k_{(n,\bar{S})}^2}\Big)\bigg)'}{k_{(n,\bar{S})}/\Big(\sqrt{k_g^2+k_{(n,\bar{S})}^2}\Big)}  \\ 
& = \tau_{(r,{\bar{S}})}-2\frac{k_g'k_{(n,{\bar{S}})}-k_gk_{(n,{\bar{S}})}'}{k_g^2+k_{(n,{\bar{S}})}^2}. \qedhere
\end{split}
\end{equation*}
\end{proof}

By calling $\beta_S$ the functions measuring, anticlockwise with respect to $B$, the angle between $T$ and $r_S$, direct computations provide the following lemma.

\begin{lemma} \label{rulingangle}
Given a proper fold along the $C^3$ foldline $\bar{\gamma}$ onto the $C^3$ ridge $\gamma$, the angles $\beta_S$ between $T$ and $r_S$, for $S \in \{+,-\}$, satisfy
$$
\cos(\beta_S)= \frac{\tau_{(r,S)}}{\sqrt{\tau_{(r,S)}^2+k_{(n,S)}^2}}, \;\; \sin(\beta_S)= \frac{-k_{(n,S)}}{\sqrt{\tau_{(r,S)}^2+k_{(n,S)}^2}}, \;\; \cot(\beta_S)=-\frac{\tau_{(r,S)}}{k_{(n,S)}}$$
and
\begin{equation*}
\begin{split}
\beta_S' = -\frac{(\cos(\beta_S))'}{\sin(\beta_S)}= \frac{\tau_{(r,S)}'k_{(n,S)}-k_{(n,S)}'\tau_{(r,S)}}{\tau_{(r,S)}^2+k_{(n,S)}^2}. 
\end{split} 
\end{equation*}
\end{lemma}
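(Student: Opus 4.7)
The plan is to read off the three trigonometric identities directly from the expression for $r_S$ provided by Lemma \ref{foldridge}, once that expression is rewritten in the orthonormal tangent-plane basis $\{T, u_S\}$ of the developable, rather than in the Frenet basis $\{T, N, B\}$. Since $n_S$ lies in the $\{N,B\}$-plane and forms the angle $\alpha_S$ with $B$ (measured anticlockwise when viewed from the tip of $T$), one computes $n_S = \cos(\alpha_S) B - \sin(\alpha_S) N$ and hence $u_S = n_S \times T = \cos(\alpha_S) N + \sin(\alpha_S) B$. Substituting this into the formula of Lemma \ref{foldridge} gives
$$
r_S \;=\; \frac{\tau_{(r,S)}}{\sqrt{\tau_{(r,S)}^2 + k_{(n,S)}^2}}\, T \;+\; \frac{-k_{(n,S)}}{\sqrt{\tau_{(r,S)}^2 + k_{(n,S)}^2}}\, u_S,
$$
which exhibits $r_S$ as a unit vector in the tangent plane $\mathrm{span}(T, u_S)$. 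Since $\beta_S$ is by construction the (signed) angle from $T$ to $r_S$ in this plane, matching coefficients against $r_S = \cos(\beta_S) T + \sin(\beta_S) u_S$ yields the first two identities, and $\cot(\beta_S) = -\tau_{(r,S)}/k_{(n,S)}$ follows immediately.

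For the derivative, I would start from the general chain-rule identity $\beta_S' = -(\cos(\beta_S))'/\sin(\beta_S)$, which is valid wherever $\sin(\beta_S) \neq 0$ (guaranteed by properness of the fold, since properness forces $\alpha_S \neq 0$ and hence $k_{(n,S)} \neq 0$). Writing $\tau$ and $k$ for $\tau_{(r,S)}$ and $k_{(n,S)}$, the quotient rule applied to $\cos(\beta_S) = \tau/\sqrt{\tau^2 + k^2}$ produces the numerator $\tau'(\tau^2+k^2) - \tau(\tau\tau' + kk') = k(k\tau' - \tau k')$, so that one power of $k$ cancels against the $\sin(\beta_S) = -k/\sqrt{\tau^2+k^2}$ in the denominator, and the remaining quotient collapses to $(k\tau' - \tau k')/(\tau^2 + k^2)$, as claimed.

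No genuine obstacle is anticipated: the lemma is essentially a repackaging of Lemma \ref{foldridge} in the Darboux basis of the chosen developable, followed by a direct differentiation. The only point requiring some care is checking that the sign convention implicit in \emph{anticlockwise with respect to $B$} is consistent with the orientation of $u_S$ inherited from the right-handed Darboux frame $\{T, u_S, n_S\}$, which is precisely what the identification $u_S = \cos(\alpha_S) N + \sin(\alpha_S) B$ accomplishes.
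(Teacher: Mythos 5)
Your proposal is correct and matches the paper's intent: the paper gives no written proof, stating only that ``direct computations provide the following lemma,'' and your argument is exactly that computation — rewriting the ruling direction of Lemma \ref{foldridge} in the Darboux basis via $u_S = n_S \times T = \cos(\alpha_S)N + \sin(\alpha_S)B$, reading off $\cos\beta_S$ and $\sin\beta_S$, and differentiating. The sign bookkeeping (right-handedness of $\{T,u_S,n_S\}$, consistency of ``anticlockwise with respect to $B$'' with the orientation induced by $n_S$ since $\langle n_S,B\rangle=\cos\alpha_S>0$, and nonvanishing of $\sin\beta_S$ from properness) is all handled correctly.
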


\begin{lemma} \label{secondcurv} The developable surfaces on the two sides of a proper curved fold have no planar points.
\end{lemma}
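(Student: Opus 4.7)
The plan is to reduce the question of planarity at an arbitrary surface point to a condition along the ridge alone, and then to use that $k_{(n,S)}$ is nonzero in a proper fold.

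First I would invoke Theorem \ref{thmruling} to obtain, near any point of either developable, a ruled parametrization $a(u,t)=\gamma(u)+t\cdot r(u)$, where $\gamma$ can be taken to be the ridge. The second conclusion of that theorem is that the tangent plane is constant along each ruling, so the unit normal $n$ is a function of $u$ alone and $\partial_t n\equiv 0$.

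The key observation is that a point is planar iff both principal curvatures vanish iff the Weingarten map is zero iff $dn=0$ at that point. Combined with $\partial_t n\equiv 0$, planarity at $(u_0,t_0)$ is equivalent to $n'(u_0)=0$, a condition purely on the ridge. From the Darboux frame equations recalled in \S\ref{recall} one has $n'=-k_{(n,S)}T-\tau_{(r,S)}u_S$, so $\|n'\|^2=k_{(n,S)}^2+\tau_{(r,S)}^2$. For a proper fold the angle $\alpha_S$ satisfies $\alpha_S\neq 0$ while $k>0$, so $k_{(n,S)}=-\sin(\alpha_S)k$ never vanishes along the ridge. Hence $n'(u)\neq 0$ for every $u$, and neither developable contains a planar point.

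I do not see a serious obstacle: once one recognises that the constancy of $n$ along rulings collapses the question to the one-parameter vector field $n'(u)$, the argument is a short computation in the Darboux frame together with the elementary inequality $k_{(n,S)}\neq 0$ granted by properness. The only care needed is to confirm that the ruled parametrization covers a full neighbourhood of every surface point, which is precisely the local statement of Theorem \ref{thmruling}.
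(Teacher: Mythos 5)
Your reduction of flatness to the vanishing of $n'$ along the ridge, followed by $\|n'\|^2=k_{(n,S)}^2+\tau_{(r,S)}^2>0$, is a legitimate and in fact somewhat more elementary route than the one the paper takes (Euler's formula $k_{(p,S)}=k_{(n,S)}/\sin^2(\beta_S)$ at the ridge plus the classical fact that the point type, parabolic or flat, is constant along a ruling). But as written your argument has a genuine circularity at its first step: Theorem \ref{thmruling} has ``no flat points'' among its hypotheses, which is exactly the conclusion of Lemma \ref{secondcurv}. You cannot use it to produce the ruled parametrization here; indeed the paper only applies Theorem \ref{thmruling} to the fold developables \emph{after} this lemma is established (see the paragraph following it).

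The repair is to take the ruled structure from the construction of the fold rather than from the classification of developables: Lemma \ref{foldridge} exhibits the two surfaces as $a(u,t)=\gamma(u)+t\cdot r_S(u)$ with $r_S$ given explicitly, and the fact that the tangent plane (hence $n$) is constant along each such line is part of that construction in \cite{more} --- it can also be checked directly, since $r_S$ is proportional to $\tau_{(r,S)}T-k_{(n,S)}u_S$ while $n'=-k_{(n,S)}T-\tau_{(r,S)}u_S$. Once $\partial_t n\equiv 0$ is obtained this way, your remaining steps are fine: at a regular point of the parametrization (i.e.\ away from the regression curve, which is excluded from the surface) the vectors $a_u$ and $a_t$ span the tangent plane, so the shape operator vanishes at $(u_0,t_0)$ iff $n'(u_0)=0$, and this is impossible because $k_{(n,S)}=-\sin(\alpha_S)k\neq 0$ for a proper fold. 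With that substitution the proof is correct.
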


\begin{proof} 

By knowing the normal curvature $k_{(n,S)}$ of the ridge with respect to the developable surface and the angle $\beta_S$ its tangent forms with the ruling direction, we can retrieve the nonzero principal curvature $k_{(p,S)}$ by using Euler's formula \cite[\S 3-2]{docarmo},
$$
k_{(p,S)}=\frac{k_{(n,S)}}{\sin(\beta_S)^2}.
$$
Since we are considering proper folds, this expression is well defined and nonzero. Finally, it is a classical result that if a ruling contains a parabolic, resp. a flat point, then all of its points must be parabolic, resp. flat \cite[Cor. 6, Chap. 5]{spivak}.

\end{proof}


Although in general the regularity of the ruled parametrization of a developable surface is not greater than $C^0$ (see \cite{ushakovasy} for an explicit analysis of this phenomenon), if the developable presents no planar rulings as in the case of a proper fold then the regularity of the surface passes over to the ruled parametrization in the way described in Theorem \ref{thmruling}. In particular, if the foldline and the ridge are of regularity class $C^h$ then the ruled parametrization is $C^{h-2}$.

The last task we tackle in this section is concerned with locating singular points of the developables of a proper fold, that is identifying the so called \textit{regression curve}, obtained as the envelope of the family of rulings of the developable:
$$
R_S=\gamma- \frac{\langle \gamma',r_S' \rangle}{\langle r_S',r_S'\rangle} r_S = \gamma+ \frac{\sin(\beta_S)}{\beta_S'+k_g} r_S.
$$
This expression is easy to obtain by computing the limit intersection of two rulings approaching each other in the developed state (the formula can for example be found in \cite{sabitovlow}, where developables of low smoothness are investigated in relation to their regression curve). In the setting of proper folding, if we assume the developable is $C^2$ then $r$ is $C^1$ and the regression curve must be projectively continuous, that is it can possibly have points at infinity. If $r$ is just piecewise $C^1$, the expression above is still well-defined if one allows jump discontinuities to occur. 

\section{Local folding along closed convex curves} \label{localconvex}
In this short section we provide two ways one can construct closed space curves onto which it is locally possible to fold along closed convex foldlines. The fact that the curvature of the ridge must everywhere be strictly greater then the curvature of the foldline (Lemma \ref{foldridge}) implies a necessary condition to proper fold along a closed curve \cite{more}: the total curvature of the ridge must be strictly greater than $2\pi$, preventing it from lying in a plane by Fenchel's theorem \cite{fenchekru}.

\subsection*{Ridges on a sphere} 
If a curve on the unit sphere is longer than $2\pi$ then, by adequate rescaling, it is possible to fold onto the curve along the unit circle, i.e. it is possible to extend the isometry between the two curves to a local curved fold.

\begin{lemma} \label{main} Let $\omega$ be a closed $C^3$ curve of length $L>2\pi$ on the unit sphere, then it is possible to proper fold along the unit circle onto $\gamma:=\frac{2\pi}{L}\omega$.
\end{lemma}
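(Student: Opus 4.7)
The strategy is to verify directly that the hypotheses of Lemma \ref{foldridge} apply with foldline $\bar{\gamma}$ equal to the unit circle and ridge $\gamma := \frac{2\pi}{L}\omega$, after which the existence of a proper fold is immediate. The two conditions to check are that $\bar{\gamma}$ and $\gamma$ have the same length and that the strict pointwise inequality $0 < k_g < k$ holds along the ridge.

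The length condition is immediate: parameterizing $\omega$ by arclength over $[0,L]$, the homothety gives $\gamma(t) = \frac{2\pi}{L}\omega\!\left(\frac{L}{2\pi}t\right)$, which is unit-speed and of total length $2\pi$, matching the perimeter of $\bar{\gamma}$. Regularity is preserved, so $\gamma$ is $C^3$ as required.

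For the curvature comparison, the unit circle has geodesic curvature $k_g \equiv 1$ in the plane. To bound the curvature of $\gamma$ from below, I would first exploit the spherical constraint $\langle \omega,\omega\rangle = 1$: differentiating twice in arclength gives $\langle \omega,\omega'\rangle = 0$ and then $\langle \omega,\omega''\rangle = -\|\omega'\|^2 = -1$. Since $\omega$ is unit-speed we also have $\omega' \perp \omega''$, so decomposing $\omega'' = -\omega + v$ with $v$ orthogonal to both $\omega$ and $\omega'$ yields $k_\omega = \|\omega''\| = \sqrt{1+\|v\|^2} \geq 1$. Under a homothety of factor $c = \frac{2\pi}{L}$, curvature rescales by $1/c$, hence
$$ k_\gamma(t) = \frac{L}{2\pi}\, k_\omega\!\left(\tfrac{L}{2\pi}t\right) \geq \frac{L}{2\pi} > 1 = k_g, $$
pointwise, the final strict inequality being precisely the assumption $L > 2\pi$.

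With $k_g = 1 \neq 0$ and $k_\gamma > k_g$ everywhere, Lemma \ref{foldridge} yields two proper folds on the two sides of the osculating plane of $\gamma$, and because $\bar{\gamma}$, $\gamma$ and all their geometric invariants are $2\pi$-periodic, the developables produced by that lemma glue consistently into a closed proper fold along the entire unit circle (restricted, if necessary, to a sufficiently thin annular neighbourhood so as to stay clear of the regression curves described at the end of \S\ref{local}). I do not expect any essential obstacle: the only delicate point is the sharp curvature lower bound $k_\omega \geq 1$ for curves on the unit sphere, which is exactly what converts the hypothesis $L > 2\pi$ into the strict curvature excess $k_\gamma > k_g$ demanded by the definition of a proper fold.
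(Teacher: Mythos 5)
Your proof is correct and takes essentially the same route as the paper: both reduce the claim to verifying the hypotheses of Lemma \ref{foldridge}, using that a curve on the unit sphere has curvature at least $1$ (you derive this from differentiating $\langle\omega,\omega\rangle=1$; the paper cites the unit normal curvature of the sphere) and that the homothety by $2\pi/L<1$ multiplies curvature by $L/2\pi>1$, yielding $k>1=k_g$ everywhere. Your extra remarks on length matching and periodicity are fine but not part of the paper's (very terse) argument.
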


\begin{proof} 

Since $\omega$ lies on the unit sphere, its curvature is greater or equal to $1$, the normal curvature with respect to the sphere being everywhere $1$. Therefore, the curvature $k$ of $\gamma$ satisfies $k>1$.  This guarantees that the unit circle and the ridge $\gamma$ satisfy the hypotheses of Lemma \ref{foldridge}.

\end{proof}

The two inner developables of Fig.\ \ref{hypar3povray} (in green) are an example of a proper fold along the unit circle obtained by such a construction.

\subsection*{Ridges on a torus} 
Toroidal curves are another interesting class of space curves suitable for proper folding along any convex closed foldline. For $a \in \mathbb{R}$, $p,q \in \mathbb{N}$ and $\lambda:=q/p$, we consider the family of curves on $[0,p 2\pi]$ given by
$$
\omega_{a,(p,q)} (t):=\bigl(  \bigl(a+\cos(\lambda t)\bigl)\cos( t), \bigl(a+\cos(\lambda t)\bigl)\sin(t),\sin(\lambda t)  \bigl).
$$
For any fixed value of $a$, the curvature of the curve can be made arbitrarily close to $1$ everywhere by picking a large value of $q$. Since the length of the curve is monotone in $a$, by rescaling the curve to be of length $1$, we can obtain ridges of arbitrarily large minimum curvature. These are therefore suitable to proper fold along any closed convex foldline. By writing down the expression for the torsion one can additionally observe that in this regime its value tends to be $0$ everywhere. Since the first derivative with respect to $t$ of the curvature function can be made everywhere arbitrarily small and with that the angle $\alpha$ between the osculating and the tangent planes close to constant, we can even force the rulings emanating from the ridge to be about orthogonal to the tangent direction everywhere along the curve. Self-intersections of the developables obtained may occur.

The idea of employing a toroidal curve as the ridge of a curved fold was already mentioned as an example in \cite{masterthesis}. Moreover, in \cite{hprinciple} it is shown that in the isotopy class of any $C^2$ knot of the space there exists a $C^\infty$ knot of constant curvature which is arbitrarily close to the first one both in trajectory and tangent direction. Since the curvature of the approximating knot can be chosen to be any value larger then the maximum curvature of the starting knot, constructions as the one we described for toroidal curves are possible in a much broader setting. 

\begin{figure}[ht!]
\scalebox{0.35}{\includegraphics{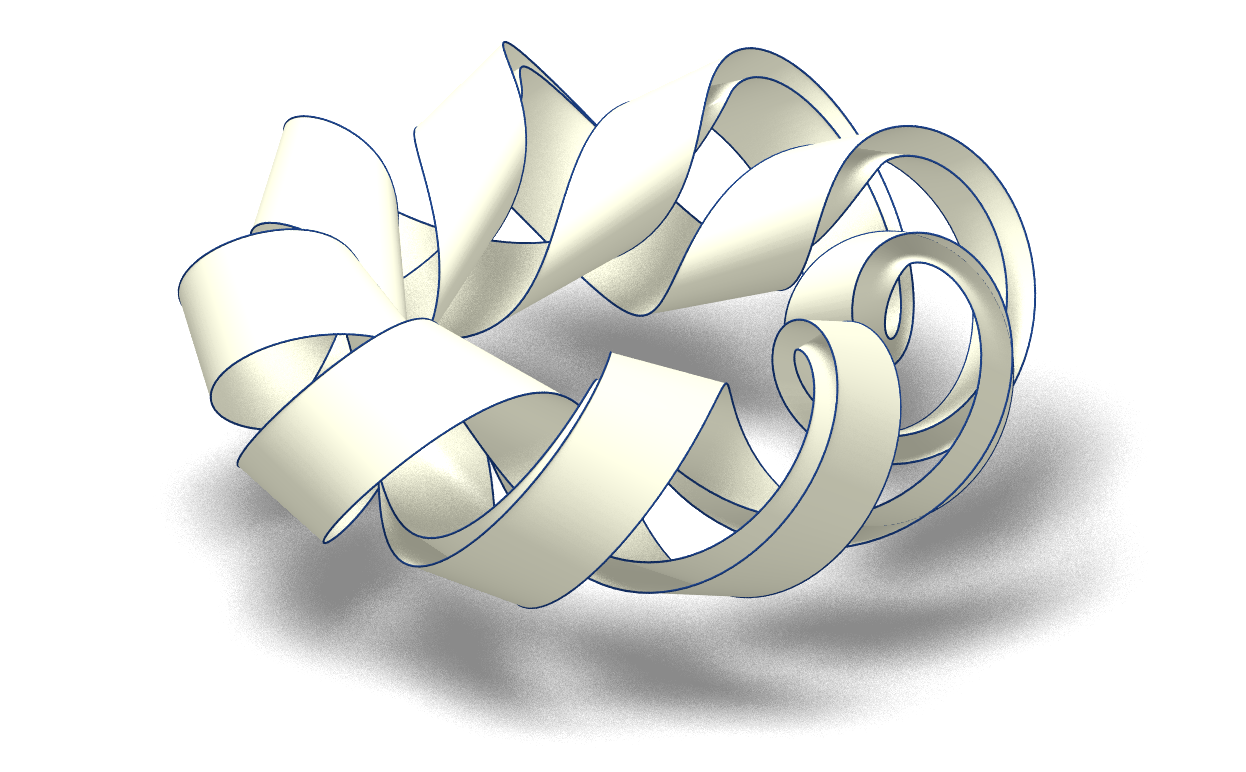}} \caption{Fold along the unit circle of an annulus of width $2/10$ onto the rescaled toroidal knot $\omega_{3,(9,2)}$.}
\end{figure}

\section{Propagation to the next foldline} \label{secprop1} In this section we look at folds involving two foldlines. In particular we first fold along the first foldline by prescribing a ridge as we did in \S \ref{local} and then we induce, if the isometry on one of the two sides extends suitably till the second foldline, a proper fold (consistent with the first one) along such a curve as well. 

Let $\bar{\gamma}_1$ and $\bar{\gamma}_2$ be two non-intersecting planar curves of nonzero curvature in an open domain $D$ of $\mathbb{R}^2$. We construct a proper fold along $\bar{\gamma}_1$ onto the ridge $\gamma_1$ and assume that the isometries on the two sides of it extend to the whole domain $D $; this means that exactly one ruling passes through any of the point of $D$ different from those of $\bar{\gamma}_1$ and that the preimage of the regression curve of the developable is not reached within the domain along the ruling. We also assume that no rulings coming out of $\bar{\gamma}_1$ are tangent to $\bar{\gamma}_2$ where they intersect it for the first time. Under these premises, the restriction of the isometry of $D$ to the second foldine induces a ridge $\gamma_2$ onto which it is possible to proper fold along $\bar{\gamma}_2$. We guarantee this by arguing that all points of the developables of a proper fold are parabolic by Lemma \ref{secondcurv} and that no rulings are tangent to $\gamma_2$; therefore, its normal curvature with respect to the developable must be different from zero, ensuring that also $\bar{\gamma}_2$ and $\gamma_2$ satisfy the hypotheses of Lemma \ref{foldridge}. 

Note that our assumptions do not imply that all the rulings emanating from $\gamma_1$ interesect $\gamma_2$. Nevertheless, by possibly restricting $D$ we can force a bijection between the two curves mediated by the family of rulings, associating to a point $\gamma_1(s_1)$ the point $\gamma_2\bigl(s_2(s_1)\bigl)$, obtained as the first intersection of the ruling emanating from $\gamma_1(s_1)$ with $\gamma_2$, $s_1$ and $s_2$ being the respective arc-legth parameters (Fig.\ \ref{propagationtonext}). 

If we want now to \textit{propagate} the proper fold along $\bar{\gamma}_2$ onto $\gamma_2$ and we want it to be consistent with the first one, we have only one choice, since by Lemma \ref{foldridge} the developables of a proper fold lie on the same side of the osculating plane of the ridge and one of them is already prescribed by the first fold. 

\begin{figure}
\begin{tikzpicture}
          \node[anchor=south west,inner sep=0] at (0,0) {\scalebox{0.92}{\includegraphics[width=\textwidth]{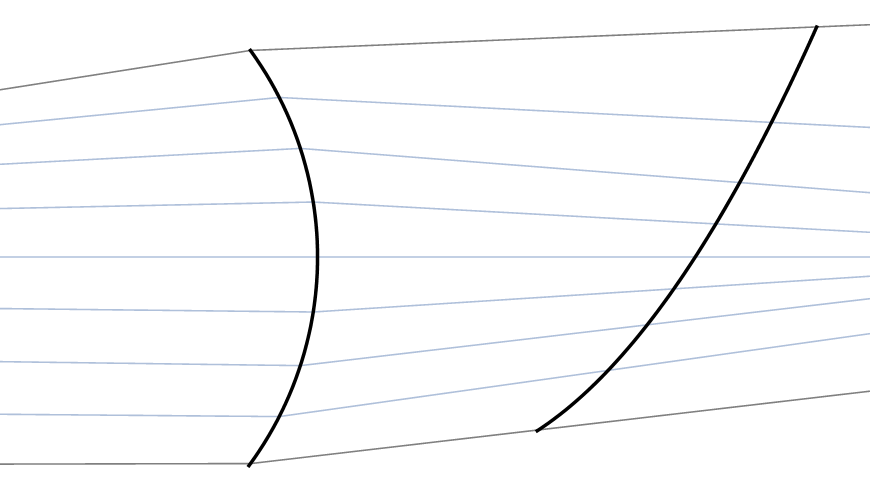}}};
           \draw[thick] (4.45,3.85) arc (-5:  85:0.3cm);
           \draw[thick] (3.85,3.87) arc (181:  104:0.3cm);
           \draw[thick] (9.82,3.85) arc (82:  0:0.3cm);
           \node at (0.2,5.7) {$D$};
           \node at (3.7,3.5) {$\bar{\gamma}_1(s_1)$};
           \node at (4.94,4.12) {$\beta_{(1,S)}$};
           \node at (3.4,4.15) {$\beta_{(1,\bar{S})}$};
           \node at (10.55,3.82) {$\beta_{(2,S)}$};
            \node at (8.82,3.85) {$\bar{\gamma}_2\bigl(s_2(s_1)\bigl)$};
            
            \node at (3.65,0.05) {$\bar{\gamma}_1$};
            \node at (7.6,0.5) {$\bar{\gamma}_2$};
\end{tikzpicture}
\caption{One of the two isometries obteined after proper folding along $\bar{\gamma}_1$ extends to the foldline $\bar{\gamma}_2$. The domain has possibly been trimmed to guarantee a bijection provided by the rulings between the foldlines.} \label{propagationtonext}
\end{figure}

In the next lemma we express the normal curvature and the relative torsion of $\gamma_2$ with respect to the developable obtained by proper folding onto $\gamma_1$, in function of the normal curvature and the relative torsion of $\gamma_1$.


\begin{lemma} \label{propdelta}
Let $\gamma_1$ and $\gamma_2$ be two non-intersecting curves on a $C^2$ developable surface whose points are all parabolic. We assume that the two curves have nonzero geodesic curvature, nonzero normal curvature (their tangents are never parallel to the rulings) and that a bijection $\gamma_1(s_1) \longleftrightarrow \gamma_2\bigl(s_2(s_1)\bigl)$ is induced by considering the first intersection point between $\gamma_2$ and the ruling through $\gamma(s_1)$. If $\delta$ is the angle between the tangent vectors at correspondent points $\gamma_1'(s_1)$ and $\gamma_2'\bigl(s_2(s_1)\bigl)$, measured anticlockwise with respect to the surface normal $n$, then
$$
\begin{pmatrix}
k_{2,n} \\
\tau_{2,r}
\end{pmatrix} = \frac{1}{s_2'}\begin{pmatrix}
\;\;\,\cos(\delta)k_{1,n} +\sin(\delta)\tau_{1,r} \\
-\sin(\delta)k_{1,n} +\cos(\delta)\tau_{1,r}
\end{pmatrix} = \hspace{-1pt} \frac{1}{s_2'} R_{-\delta}
\begin{pmatrix}
k_{1,n} \\
\tau_{1,r}
\end{pmatrix}= \hspace{-1pt} \frac{k_{2,g}}{\delta'+k_{1,g}} R_{-\delta}
\begin{pmatrix}
k_{1,n} \\
\tau_{1,r}
\end{pmatrix},
$$
where $k_{i,g}$,$k_{i,n}$ and $\tau_{i,r}$ respectively are the geodesic curvature, the normal curvature and the relative torsion of $\gamma_i$ with respect to the developable, for $i\in\{1,2\}$. $R_\omega$ denotes the anticlockwise rotation by the angle $\omega$.
\end{lemma}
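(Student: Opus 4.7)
The proof plan hinges on the defining property of developables recalled in Theorem \ref{thmruling}: the tangent plane (and therefore the unit normal $n$) is constant along each ruling. Since $\gamma_1(s_1)$ and $\gamma_2\bigl(s_2(s_1)\bigl)$ lie by construction on the same ruling, we have the pointwise equality $n(\gamma_1(s_1)) = n\bigl(\gamma_2(s_2(s_1))\bigr)$ for every $s_1$. This is the backbone of the argument and is what makes the two Darboux frames genuinely comparable in a simple way.

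Differentiating this identity with respect to $s_1$ and invoking, on each side, the Darboux-frame relation $n' = -k_n T - \tau_r u$ recalled in Section \ref{recall}, the chain rule yields
\begin{equation*}
-k_{1,n} T_1 - \tau_{1,r} u_1 \;=\; s_2'\bigl(-k_{2,n} T_2 - \tau_{2,r} u_2\bigr).
\end{equation*}
By the definition of $\delta$ (measured anticlockwise with respect to the common $n$) together with $u_i = n \times T_i$, the two tangent-plane bases are related by
\begin{equation*}
T_2 = \cos(\delta)\, T_1 + \sin(\delta)\, u_1, \qquad u_2 = -\sin(\delta)\, T_1 + \cos(\delta)\, u_1.
\end{equation*}
Substituting these into the previous identity and matching coefficients against $\{T_1, u_1\}$ produces a $2\times 2$ linear system whose solution is precisely the first equality of the lemma, namely $(k_{2,n}, \tau_{2,r})^{\top} = \tfrac{1}{s_2'}\, R_{-\delta}\, (k_{1,n}, \tau_{1,r})^{\top}$.

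For the second equality, it remains to show $(s_2')^{-1} = k_{2,g}/(\delta' + k_{1,g})$. I would obtain it by invoking Lemma \ref{geodesic} and unfolding the developable isometrically onto a planar domain. Under such unfolding the angle between tangent vectors at corresponding points is preserved (so it equals $\delta$), and for each planar image the tangent angle $\theta_i$ measured against a fixed reference direction satisfies $d\theta_i/ds_i = k_{i,g}$, since in the plane the signed curvature coincides with the geodesic curvature. Differentiating the relation $\delta(s_1) = \theta_2\bigl(s_2(s_1)\bigr) - \theta_1(s_1)$ with respect to $s_1$ gives $\delta' = k_{2,g}\, s_2' - k_{1,g}$, which rearranges to the desired identity.

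The main obstacle I anticipate is bookkeeping of orientations: ensuring that the anticlockwise convention with respect to $n$ used for $\delta$ is consistent with the orientation of the unfolded planar domain, so that the rotation matrix $R_{-\delta}$ on one side and the signs in $\delta' = k_{2,g} s_2' - k_{1,g}$ on the other side line up. Once this is pinned down (a sanity check on two concentric circles joined by radial rulings, where $\delta \equiv 0$ and every quantity is explicit, is enough to fix the signs), the rest of the argument is a linear-algebraic identification together with the chain rule.
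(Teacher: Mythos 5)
Your proposal is correct and follows essentially the same route as the paper: both rest on the constancy of the surface normal along rulings, differentiate $n_1(s_1)=n_2(s_2(s_1))$ via the Darboux equations, and identify $(k_{i,n},\tau_{i,r})$ as coordinates of the same vector in bases related by $R_\delta$. Your derivation of $s_2'=(\delta'+k_{1,g})/k_{2,g}$ via the tangent-angle function in the developed state is just a reparametrized form of the paper's differentiation of $\bar{\gamma}_2'=R_\delta\bar{\gamma}_1'$, so no substantive difference remains.
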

\begin{proof}
Let $n_1$, $n_2$ be the restriction of the surface normal $n$ to the curves $\gamma_1$, $\gamma_2$ and $\{\gamma_1',u_1,n_1\}$, $\{\gamma_2',u_2,n_2\}$ the respective Darboux frames. Since the surface normal is constant along the ruling we have
\begin{equation*}
\begin{split}
& s_2'(s_1) \Big(-k_{2,n} (s_2(s_1)) \gamma_2'\bigl(s_2(s_1)\bigl) -\tau_{2,r} (s_2(s_1)) u_2\bigl(s_2(s_1)\bigl)\Big) \\ &=s_2'(s_1)n_2'\bigl(s_2(s_1)\bigl) =\bigl(n_2\bigl(s_2(s_1)\bigl)\bigl)'=\bigl(n_1(s_1)\bigl)'= -k_{1,n} (s_1) \gamma_1'(s_1) -\tau_{1,r} (s_1) u_1(s_1).
\end{split}
\end{equation*}
The vectors $\gamma'_2$ and $u_2$ can be obtained rotating respectively $\gamma'_1$ and $u_1$ by $\delta$ about the surface normal $n$ and thus, interpreting $k_{1,n}$, $\tau_{1,r}$ and $k_{2,n}$, $\tau_{2,r}$ as coordinates of the same vector in different bases we get
$$
\begin{pmatrix}
k_{2,n} \\
\tau_{2,r}
\end{pmatrix}= \frac{1}{s_2'} R_{-\delta} \begin{pmatrix}
k_{1,n} \\
\tau_{1,r}
\end{pmatrix}.
$$ 
To express the velocity $s_2'$ we look at the developed state $\bar{\gamma}_1$, $\bar{\gamma}_2$ of the two curves and exploit the relation $\bar{\gamma}_2'=R_\delta \bar{\gamma}_1'$ at correspondent points. By taking the derivative with respect to $s_1$ we obtain
$$
s_2'\bar{\gamma}_2''=\delta' R_\delta R_{\frac{\pi}{2}} \bar{\gamma}_1' + R_\delta \bar{\gamma}_1'',
$$
which provides $s_2'=(\delta'+k_{1,g})/k_{2,g}$.
\end{proof}

We want now to point out an additional way of computing the normal curvature and the relative torsion of the second ridge once a proper fold is prescribed for the first one. This expression will highlight the role played by the regression curve in the propagation and provide a direct formula for computing the velocity of the parametrization of the second ridge induced by the rulings correspondence.

\begin{lemma} \label{ktaureg}
Let $\gamma_1$, $\gamma_2$ be two curves on a developable surface as in Lemma \ref{propdelta}. Let also $\beta_{(1,S)}$ be the angle between the tangent $\gamma_1'(s_1)$ and the ruling direction and $\beta_{(2,S)}$ the one between $\gamma_2'\bigl(s_2(s_1)\bigl)$ and the same ruling direction (Fig.\ \ref{propagationtonext}). If $\bar{v}$ is the distance between $\gamma_1(s_1)$ and $\gamma_2\bigl(s_2(s_1)\bigl)$, then 
\begin{equation*}
\begin{split}
k_{(2,n,S)} = & \;\;\;\; \bigg( \frac{\sin(\beta_{(2,S)})}{\sin(\beta_{(1,S)})} \bigg)^2 \frac{k_{(1,n,S)}}{1-\bar{v}\frac{\beta_{(1,S)}'+k_{(1,g)}}{\sin(\beta_{(1,S)})}}, \\
\tau_{(2,r,S)}  = & -\bigg( \frac{\cos(\beta_{(2,S)})}{\sin(\beta_{(1,S)})} \bigg) \bigg( \frac{\sin(\beta_{(2,S)})}{\sin(\beta_{(1,S)})} \bigg) \frac{k_{(1,n,S)}}{1-\bar{v}\frac{\beta_{(1,S)}'+k_{(1,g)}}{\sin(\beta_{(1,S)})}}. 
\end{split}
\end{equation*}
\end{lemma}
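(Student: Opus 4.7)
The plan is to build on Lemma \ref{propdelta}, which already delivers $(k_{(2,n,S)}, \tau_{(2,r,S)})$ as the image of $(k_{(1,n,S)}, \tau_{(1,r,S)})$ under the rotation $R_{-\delta}$ divided by $s_2'$. To reach the form stated in Lemma \ref{ktaureg}, two auxiliary computations are required: first, convert $\delta$ and $\tau_{(1,r,S)}$ into expressions involving $\beta_{(1,S)}$, $\beta_{(2,S)}$ and $k_{(1,n,S)}$; second, rewrite $s_2'$ in terms of $\bar{v}$ in place of the ratio $k_{(2,g)}/(\delta'+k_{(1,g)})$ that Lemma \ref{propdelta} provides.

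For the first step, Theorem \ref{thmruling} guarantees that the tangent plane of the developable is constant along each ruling, so $\gamma_1'(s_1)$ and $\gamma_2'(s_2(s_1))$ live in the same plane and both form signed angles with the shared ruling direction, namely $\beta_{(1,S)}$ and $\beta_{(2,S)}$. Consistency of orientation yields $\delta = \beta_{(1,S)} - \beta_{(2,S)}$. Lemma \ref{rulingangle} gives $\tau_{(1,r,S)} = -k_{(1,n,S)}\cot\beta_{(1,S)}$. Substituting these into the matrix expression of Lemma \ref{propdelta} and applying the sine and cosine subtraction identities produces the compact forms
\[
\cos\delta\,k_{(1,n,S)} + \sin\delta\,\tau_{(1,r,S)} = \frac{k_{(1,n,S)}\sin\beta_{(2,S)}}{\sin\beta_{(1,S)}},\quad -\sin\delta\,k_{(1,n,S)} + \cos\delta\,\tau_{(1,r,S)} = -\frac{k_{(1,n,S)}\cos\beta_{(2,S)}}{\sin\beta_{(1,S)}}.
\]

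For the second step, I would work in the developed planar state, in which rulings remain straight, $\bar{v}$ coincides with the ambient distance between $\gamma_1$ and $\gamma_2$ along the ruling, and all the quantities $k_{(1,g)}$, $\beta_{(i,S)}$, $s_2'$ are preserved. Writing $\bar{\gamma}_2(s_2(s_1)) = \bar{\gamma}_1(s_1) + \bar{v}(s_1)\bar{r}(s_1)$ and differentiating, one uses the planar Frenet relations $\bar{\gamma}_1'' = k_{(1,g)}\bar{u}_1$, $\bar{u}_1' = -k_{(1,g)}\bar{\gamma}_1'$, and the decomposition $\bar{r} = \cos\beta_{(1,S)}\bar{\gamma}_1' + \sin\beta_{(1,S)}\bar{u}_1$ to compute $\bar{r}' = (\beta_{(1,S)}' + k_{(1,g)})\bar{r}^\perp$, where $\bar{r}^\perp := -\sin\beta_{(1,S)}\bar{\gamma}_1' + \cos\beta_{(1,S)}\bar{u}_1$. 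Projecting the resulting identity
\[
s_2'\,\bar{\gamma}_2' = \bar{\gamma}_1' + \bar{v}'\,\bar{r} + \bar{v}(\beta_{(1,S)}' + k_{(1,g)})\bar{r}^\perp
\]
onto $\bar{r}^\perp$ and using $\bar{\gamma}_2' = \cos\beta_{(2,S)}\bar{r} - \sin\beta_{(2,S)}\bar{r}^\perp$ yields $s_2' = [\sin\beta_{(1,S)} - \bar{v}(\beta_{(1,S)}' + k_{(1,g)})]/\sin\beta_{(2,S)}$. Inserting this value into the formulas of the first step delivers the stated expressions for $k_{(2,n,S)}$ and $\tau_{(2,r,S)}$.

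The only non-routine aspect is the sign and orientation bookkeeping, namely verifying that the anticlockwise conventions fixed in \S\ref{recall} and \S\ref{local} are compatible with the identity $\delta = \beta_{(1,S)} - \beta_{(2,S)}$ and with the particular signs in the decomposition of $\bar{\gamma}_2'$ along $\bar{r}$ and $\bar{r}^\perp$. Once these conventions are nailed down at the outset, the rest of the argument is a straightforward chain-rule computation combined with elementary trigonometric identities.
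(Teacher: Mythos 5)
Your proposal is correct, but it reaches the formulae by a genuinely different route than the paper. The paper's proof quotes Spivak's expression for the nonzero principal curvature along a parabolic ruling, $k_{(p,S)}(v)=\bigl(\sin\beta_{(1,S)}\bigr)^{-2}\,k_{(1,n,S)}\big/\big(1-v\,\tfrac{\beta_{(1,S)}'+k_{(1,g)}}{\sin\beta_{(1,S)}}\big)$, evaluates it at $v=\bar v$, and then converts the principal curvature into $k_{(2,n,S)}$ via Euler's formula with the angle $\beta_{(2,S)}$ and into $\tau_{(2,r,S)}$ via the relation $\cot\beta_{(2,S)}=-\tau_{(2,r,S)}/k_{(2,n,S)}$ of Lemma \ref{rulingangle}. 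You instead stay entirely within the rotation formalism of Lemma \ref{propdelta}: you simplify $R_{-\delta}(k_{(1,n,S)},\tau_{(1,r,S)})^{\mathsf T}$ using $\delta=\beta_{(1,S)}-\beta_{(2,S)}$ (the same identity the paper invokes in Lemma \ref{velocity}, so your orientation bookkeeping is consistent with its conventions) and then compute $s_2'$ directly in the developed state by differentiating $\bar\gamma_2=\bar\gamma_1+\bar v\,\bar r$ and projecting onto $\bar r^{\perp}$; your planar computation of $\bar r'=(\beta_{(1,S)}'+k_{(1,g)})\bar r^{\perp}$ and the resulting $s_2'=\bigl(\sin\beta_{(1,S)}-\bar v(\beta_{(1,S)}'+k_{(1,g)})\bigr)/\sin\beta_{(2,S)}$ check out. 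A pleasant side effect of your ordering is that it yields an independent proof of Lemma \ref{velocity}: in the paper that lemma is derived \emph{from} Lemma \ref{ktaureg} (it divides by the value of $k_{(2,n,S)}$ just obtained), whereas you establish the velocity first and deduce \ref{ktaureg} from it, so the two statements become logically interchangeable. What the paper's route buys in exchange is that the denominator $1-\bar v(\beta_{(1,S)}'+k_{(1,g)})/\sin\beta_{(1,S)}$ appears from the outset as the linear factor vanishing at the regression curve, which is precisely the interpretation the surrounding text wants to emphasize; in your derivation that reading of the denominator is true but has to be recognized after the fact.
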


\begin{proof}
As shown in Lemma 5, Chap. 5 of \cite{spivak}, the nonzero principal curvature along a parabolic ruling $\gamma+v\cdot r$ can be written as 
$$
k_{(p,S)}(v)=\frac{1}{\bigl(\sin(\beta_{(1,S)})\bigl)^2}\frac{k_{(1,n,S)}}{1-v\cdot \frac{\beta_{(1,S)}'+k_{(1,g)}}{\sin(\beta_{(1,S)})}}.
$$
This expression has been constructed by requiring the reciprocal of a linear function to attain the value $k_{(1,n,S)} / \bigl(\sin(\beta_{(1,S)})\bigl)^2$ at $v=0$ and being indeterminate at the parameter $v$ corresponding to the intersection with the regression curve. Evaluating at $\bar{v}$, applying Euler's formula and recalling that $\cot(\beta_{(2,S)})=-\tau_{(2,r,S)}/k_{(2,n,S)}$, we obtain the desired formulae for $k_{(2,n,S)}$ and $\tau_{(2,r,S)}$. 
\end{proof}

\begin{lemma} \label{velocity}
Let $\gamma_1$, $\gamma_2$ be two curves on a developable surface as in Lemma \ref{propdelta}. With $\bar{v}$ as in Lemma \ref{ktaureg}, the velocity of the parametrization of the second ridge $\gamma_2\bigl(s_2(s_1)\bigl)$ can be expressed as
$$
s_2'=\frac{\sin(\beta_{(1,S)})}{\sin(\beta_{(2,S)})}\bigg(1-\bar{v}\frac{\beta_{(1,S)}'+k_{(1,g)}}{\sin(\beta_{(1,S)})} \bigg).
$$
\end{lemma}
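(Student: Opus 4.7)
The plan is to combine the two previous lemmas by reading off $s_2'$ from the magnitudes of the vectors appearing in Lemma \ref{propdelta}, and then substituting the explicit expression for $k_{(2,n,S)}$ given by Lemma \ref{ktaureg}. Concretely, the identity
$$
\begin{pmatrix} k_{2,n} \\ \tau_{2,r} \end{pmatrix} = \frac{1}{s_2'} R_{-\delta} \begin{pmatrix} k_{1,n} \\ \tau_{1,r} \end{pmatrix}
$$
from Lemma \ref{propdelta} already encodes the ratio $s_2'$ in its conformal factor, and the rotation $R_{-\delta}$ washes out once we pass to Euclidean norms.

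First I would take norms on both sides of the displayed identity. Since $R_{-\delta}$ is an isometry of $\mathbb{R}^2$, this yields
$$
\sqrt{k_{(2,n,S)}^2+\tau_{(2,r,S)}^2} \;=\; \frac{1}{s_2'}\sqrt{k_{(1,n,S)}^2+\tau_{(1,r,S)}^2}.
$$
Then I would apply Lemma \ref{rulingangle}, which gives $\sin(\beta_{(i,S)}) = -k_{(i,n,S)}/\sqrt{k_{(i,n,S)}^2+\tau_{(i,r,S)}^2}$ for $i \in \{1,2\}$, and hence $\sqrt{k_{(i,n,S)}^2+\tau_{(i,r,S)}^2} = -k_{(i,n,S)}/\sin(\beta_{(i,S)})$. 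Substituting on both sides and cancelling the minus signs I obtain
$$
s_2' \;=\; \frac{\sin(\beta_{(2,S)})}{\sin(\beta_{(1,S)})}\cdot \frac{k_{(1,n,S)}}{k_{(2,n,S)}}.
$$

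Finally, I would substitute the first formula from Lemma \ref{ktaureg},
$$
k_{(2,n,S)} \;=\; \bigg(\frac{\sin(\beta_{(2,S)})}{\sin(\beta_{(1,S)})}\bigg)^{\!2}\frac{k_{(1,n,S)}}{1-\bar{v}\,\frac{\beta_{(1,S)}'+k_{(1,g)}}{\sin(\beta_{(1,S)})}},
$$
into the above ratio. The factor $k_{(1,n,S)}$ cancels and one power of $\sin(\beta_{(2,S)})/\sin(\beta_{(1,S)})$ inverts, leaving exactly the claimed expression
$$
s_2' \;=\; \frac{\sin(\beta_{(1,S)})}{\sin(\beta_{(2,S)})}\bigg(1-\bar{v}\,\frac{\beta_{(1,S)}'+k_{(1,g)}}{\sin(\beta_{(1,S)})}\bigg).
$$

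No real obstacle is expected: the statement is essentially the content of Lemma \ref{propdelta} and Lemma \ref{ktaureg} combined, with the rotation handled by passing to norms. The only point that needs minor care is the sign bookkeeping when using $\sin(\beta_{(i,S)}) = -k_{(i,n,S)}/\sqrt{k_{(i,n,S)}^2+\tau_{(i,r,S)}^2}$, since $\beta_{(i,S)}$ need not lie in a fixed quadrant; however, since $S$ is the same on both curves (the fold is propagated consistently) the signs of $k_{(1,n,S)}$ and $k_{(2,n,S)}$ are linked by the very formula of Lemma \ref{ktaureg}, so the two minus signs cancel systematically and the identity goes through as written.
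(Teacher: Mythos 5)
Your argument is correct and ends at the same place as the paper's: both proofs reduce to the intermediate identity $s_2'=\frac{k_{(1,n,S)}}{k_{(2,n,S)}}\cdot\frac{\sin(\beta_{(2,S)})}{\sin(\beta_{(1,S)})}$ and then substitute the first formula of Lemma \ref{ktaureg}. The only real difference is how that identity is pulled out of Lemma \ref{propdelta}. The paper reads off the \emph{first component} of the rotated vector and simplifies $\cos(\delta)k_{(1,n,S)}+\sin(\delta)\tau_{(1,r,S)}=k_{(1,n,S)}\bigl(\cos(\delta)-\sin(\delta)\cot(\beta_{(1,S)})\bigr)=k_{(1,n,S)}\sin(\beta_{(2,S)})/\sin(\beta_{(1,S)})$, using only $\cot(\beta_{(1,S)})=-\tau_{(1,r,S)}/k_{(1,n,S)}$ and $\beta_{(2,S)}=\beta_{(1,S)}-\delta$. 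You instead take Euclidean norms, which kills the rotation at once but incurs two sign debts that the component-wise computation never creates: (i) the norm of $\frac{1}{s_2'}R_{-\delta}v$ is $\frac{1}{\abs{s_2'}}\norm{v}$, so you are implicitly assuming $s_2'>0$ (true for the orientation-consistent parametrization of $\gamma_2$ in this setup, but it should be said); (ii) you invoke $\sin(\beta_{(2,S)})=-k_{(2,n,S)}/\sqrt{\tau_{(2,r,S)}^2+k_{(2,n,S)}^2}$ for the \emph{second} curve, whereas Lemma \ref{rulingangle} literally asserts this only for the ridge of a proper fold with the ruling direction normalized as in Lemma \ref{foldridge}; for $\gamma_2$ the relevant direction is the one pointing from $\gamma_1(s_1)$ towards $\gamma_2(s_2(s_1))$, and a priori only the $\pi$-ambiguous relation $\cot(\beta_{(2,S)})=-\tau_{(2,r,S)}/k_{(2,n,S)}$ is available. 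Your closing observation that Lemma \ref{ktaureg} links the signs of $k_{(1,n,S)}$ and $k_{(2,n,S)}$ does settle (ii) in the regular regime where $1-\bar{v}\,(\beta_{(1,S)}'+k_{(1,g)})/\sin(\beta_{(1,S)})>0$, i.e.\ before the regression curve is crossed, so the proof goes through; the paper's first-component route is simply the cleaner bookkeeping, since it never has to separate $\cot(\beta_{(2,S)})$ into an individually signed sine and cosine.
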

\begin{proof}
By Lemma \ref{propdelta}, we have
$$
s_2'=\frac{\cos(\delta)k_{(1,n,S)}+\sin(\delta)\tau_{(1,r,S)}}{k_{(2,n,S)}}.
$$
We conclude the claim of the lemma by observing that $\beta_{(2,S)}=\beta_{(1,S)}-\delta$ and hence
$$
\cos(\delta)k_{(1,n,S)}+\sin(\delta)\tau_{(1,r,S)} \hspace{-1.5pt} = \hspace{-1.5pt} k_{(1,n,S)}(\cos(\delta)-\sin(\delta)\cot(\beta_{(1,S)}) \hspace{-1.5pt} =  \hspace{-1.5pt} k_{(1,n,S)}\frac{\sin(\beta_{(2,S)})}{\sin(\beta_{(1,S)})}.
$$
\end{proof}


For what concerns the propagation of a curved fold, we can use Lemma \ref{fliptors} and Lemma \ref{propdelta} to compute, as a function of the normal curvature and relative torsion of the ridge $\gamma_1$, the normal curvature and relative torsion of the second ridge $\gamma_2$ with respect to the developable obtained after proper folding also on the other side of $\gamma_2$. Although the formulae are not simple, such a construction can possibly be iterated to further propagate the fold when several foldlines are prescribed, the propagation being uniquely determined by the chosen foldlines together with the normal curvature and relative torsion of the first ridge. Fig.\ \ref{hypar3povray} and \ref{torus4povray} show two examples of a pleated annulus with multiple folds, drawn via their explicit parametrizations, which have been obtained by the propagation process just described. The details on how to guarantee the regularity of such a construction are given in the appendix.

\begin{prep} \label{formulona}
Let $\gamma_1$, $\gamma_2$ be two curves on a developable surface $M$ as in Lemma \ref{propdelta}, then there is a unique way to propagate the fold onto $\gamma_2$. In more detail, there is a unique way to properly fold onto $\gamma_2$ (along the foldline with the correspective geodesic curvature) in a consistent way with the pre-existing developable $M$. The normal curvature and the relative torsion $k_{(2,n,S)}$, $\tau_{(2,r,S)}$ of $\gamma_2$ with respect to the new developable can be expressed as  



\begin{equation*}
\begin{split}
k_{(2,n,S)} = & - k_{(2,n,\bar{S})}, \\
\tau_{(2,r,S)}  = & \tau_{(2,r,\bar{S})} -\frac{2}{(s_2')^2 \Big(k_{(2,n,\bar{S})}^2+k_{(2,g)}^2 \Big)}  \cdot \bigg( s_2'' k_{(2,n,\bar{S})} k_{(2,g)} \\ & + s_2' \Big(k_{(2,n,\bar{S})} k_{(2,g)}' - \tau_{(2,r,\bar{S})}k_{(2,g)}\big( s_2'k_{(2,g)}-k_{(1,g)} \big) \Big) \\ &  -k_{(2,g)}\Big(\cos\bigl(\beta_{(1,S)}-\beta_{(2,S)}\bigl) k_{(1,n,\bar{S})}'+\sin\bigl(\beta_{(1,S)}-\beta_{(2,S)}\bigl) \tau_{(1,r,\bar{S})}'\Big)\bigg),
\end{split}
\end{equation*}

where $k_{(1,g)}$, $k_{(1,n,\bar{S})}$, $\tau_{(1,r,\bar{S})}$ and  $k_{(2,g)}$, $k_{(2,n,\bar{S})}$, $\tau_{(2,r,\bar{S})}$ respectively are the geodesic curvature, the normal curvature and the relative torsion of $\gamma_1$ and $\gamma_2$ with respect to $M$. Finally, $\beta_{(1,S)}$, $\beta_{(2,S)}$ are the angles between the tangents $\gamma_1'$, $\gamma_2'$ and the ruling direction $r_S$ at corresponding points $\gamma_1(s_1)$, $\gamma_2\bigl(s_2(s_1)\bigl)$.  
\end{prep}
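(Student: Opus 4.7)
The plan is to combine Lemma~\ref{fliptors}, applied at $\gamma_2$ with respect to the new developable, with the propagation identities of Lemma~\ref{propdelta}, while carefully tracking that in the proposition the curve $\gamma_2$ is parametrized by $s_1$ (via the ruling correspondence) rather than by its own arc-length $s_2$. Existence and uniqueness of the propagated proper fold come first: Lemma~\ref{secondcurv} guarantees that $M$ has only parabolic points and, together with the standing hypothesis that the tangent of $\gamma_2$ is never parallel to the rulings of $M$, forces $k_{(2,n,\bar{S})}\neq 0$; the geodesic curvature $k_{(2,g)}$ is nonzero by assumption. Hence $\bar\gamma_2$ and $\gamma_2$ meet the hypotheses of Lemma~\ref{foldridge}, producing two proper folds on opposite sides of the osculating plane of $\gamma_2$; since $M$ occupies one side, the other is uniquely determined, and the identity $k_{(2,n,S)}=-k_{(2,n,\bar{S})}$ is immediate from Lemma~\ref{fliptors}.

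For the torsion formula I would start from Lemma~\ref{fliptors} at $\gamma_2$,
\[
\tau_{(2,r,S)}=\tau_{(2,r,\bar{S})}-2\,\frac{\partial_{s_2}k_{(2,g)}\cdot k_{(2,n,\bar{S})}-k_{(2,g)}\cdot\partial_{s_2}k_{(2,n,\bar{S})}}{k_{(2,g)}^2+k_{(2,n,\bar{S})}^2},
\]
where $\partial_{s_2}$ is the arc-length derivative along $\gamma_2$. Because the primes in the proposition denote $s_1$-derivatives, the chain rule $\partial_{s_2}=(1/s_2')\,\partial_{s_1}$ pulls out an overall factor $1/s_2'$. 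The one nontrivial ingredient left to compute is $k_{(2,n,\bar{S})}'$ as an $s_1$-derivative; I would obtain it by differentiating the propagation identity $s_2'\,k_{(2,n,\bar{S})}=\cos(\delta)\,k_{(1,n,\bar{S})}+\sin(\delta)\,\tau_{(1,r,\bar{S})}$ from Lemma~\ref{propdelta}. The combination $-\sin(\delta)\,k_{(1,n,\bar{S})}+\cos(\delta)\,\tau_{(1,r,\bar{S})}$ produced by the chain rule equals $s_2'\,\tau_{(2,r,\bar{S})}$ by that same lemma, so solving for $k_{(2,n,\bar{S})}'$ gives
\[
k_{(2,n,\bar{S})}'=-\frac{s_2''}{s_2'}\,k_{(2,n,\bar{S})}+\delta'\,\tau_{(2,r,\bar{S})}+\frac{1}{s_2'}\bigl(\cos(\delta)\,k_{(1,n,\bar{S})}'+\sin(\delta)\,\tau_{(1,r,\bar{S})}'\bigr).
\]

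Two small identifications then close the argument. First, $\delta=\beta_{(1,S)}-\beta_{(2,S)}$, since $\gamma_1'$ and $\gamma_2'$ form their respective angles with the \emph{same} ruling direction. Second, $\delta'=s_2'\,k_{(2,g)}-k_{(1,g)}$, which is precisely the relation $s_2'=(\delta'+k_{(1,g)})/k_{(2,g)}$ established in the last lines of the proof of Lemma~\ref{propdelta}. Substituting the expression for $k_{(2,n,\bar{S})}'$ back into the Lemma~\ref{fliptors} formula, replacing $\delta'$ by $s_2'\,k_{(2,g)}-k_{(1,g)}$, and absorbing the residual $1/s_2'$ into the prefactor to obtain $1/(s_2')^2$, an elementary regrouping of the four resulting terms reproduces exactly the right-hand side stated in the proposition. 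The main obstacle is not conceptual but algebraic bookkeeping: one has to stay consistent about which arc-length parameter the primes refer to and about the sign convention for $\delta$, but no geometric input beyond the cited lemmas is required.
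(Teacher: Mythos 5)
Your proposal is correct and follows exactly the route the paper intends: its proof of Proposition \ref{formulona} is literally ``Direct computation by Lemma \ref{fliptors} and Lemma \ref{propdelta}'', and your computation (applying Lemma \ref{fliptors} at $\gamma_2$, converting $s_2$-derivatives to $s_1$-derivatives, and extracting $k_{(2,n,\bar{S})}'$ by differentiating the propagation identity with $\delta'=s_2'k_{(2,g)}-k_{(1,g)}$) is precisely that direct computation, carried out correctly.
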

\begin{proof}
Direct computation by Lemma \ref{fliptors} and Lemma \ref{propdelta}.
\end{proof}

\begin{cor}
We can employ $\delta=\beta_{(1,S)}-\beta_{(2,S)}$ to rewrite the formula for the relative torsion from Proposition \ref{formulona} in a slightly more compact way,
\begin{equation*}
\begin{split}
\tau_{(2,r,S)}  = & \tau_{(2,r,\bar{S})} -\frac{2}{k_{(2,n,\bar{S})}^2+k_{(2,g)}^2} \bigg( \frac{k_{(2,g)}}{\delta'+k_{(1,g)}} \bigg)^2 \cdot \bigg(k_{(2,n,\bar{S})}\Big(\delta''+k_{(1,g)}'\Big) \\ & -\tau_{(2,r,\bar{S})}\Big(\delta'+k_{(1,g)}\Big)\delta'-k_{(2,g)}\Big(\cos(\delta) k_{(1,n,\bar{S})}'+\sin(\delta) \tau_{(1,r,\bar{S})}'\Big)\bigg).
\end{split}
\end{equation*}
\end{cor}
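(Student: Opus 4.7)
The plan is to derive this by a direct algebraic rearrangement of the expression in Proposition \ref{formulona}, using two additional ingredients extracted from earlier in the section. The first is the velocity identity
$$
s_2' \, k_{(2,g)} = \delta' + k_{(1,g)},
$$
which is obtained at the end of the proof of Lemma \ref{propdelta} (and is consistent with Lemma \ref{velocity} via the identification $\delta = \beta_{(1,S)} - \beta_{(2,S)}$). The second is obtained by differentiating this identity with respect to $s_1$:
$$
s_2'' \, k_{(2,g)} + s_2' \, k_{(2,g)}' = \delta'' + k_{(1,g)}'.
$$
Together with the definitional equality $\beta_{(1,S)} - \beta_{(2,S)} = \delta$, these constitute the entire toolkit.

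I would first address the scalar prefactor. Substituting $s_2' = (\delta' + k_{(1,g)})/k_{(2,g)}$ rewrites $\dfrac{2}{(s_2')^2 \bigl(k_{(2,n,\bar{S})}^2 + k_{(2,g)}^2\bigr)}$ as $\dfrac{2}{k_{(2,n,\bar{S})}^2 + k_{(2,g)}^2}\bigl(\tfrac{k_{(2,g)}}{\delta' + k_{(1,g)}}\bigr)^2$, which is exactly the outer coefficient appearing in the corollary.

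I would then process the three-line bracketed quantity in Proposition \ref{formulona}. In the middle line the inner factor $s_2'\,k_{(2,g)} - k_{(1,g)}$ collapses to $\delta'$, so that line expands as $s_2'\,k_{(2,n,\bar{S})}\,k_{(2,g)}' - s_2'\,\tau_{(2,r,\bar{S})}\,k_{(2,g)}\,\delta'$. Grouping the first of these summands with the first-line term $s_2''\,k_{(2,n,\bar{S})}\,k_{(2,g)}$ and invoking the differentiated velocity identity produces $k_{(2,n,\bar{S})}\bigl(\delta'' + k_{(1,g)}'\bigr)$; the second summand reabsorbs $s_2' k_{(2,g)}$ to become $-\tau_{(2,r,\bar{S})}\bigl(\delta' + k_{(1,g)}\bigr)\delta'$; and the third line becomes $-k_{(2,g)}\bigl(\cos(\delta)\,k_{(1,n,\bar{S})}' + \sin(\delta)\,\tau_{(1,r,\bar{S})}'\bigr)$ after replacing $\beta_{(1,S)} - \beta_{(2,S)}$ by $\delta$. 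Combining the rewritten prefactor with this simplified bracket yields precisely the claimed identity.

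There is no genuine obstacle; the only point worth flagging is the convention that all primes denote differentiation with respect to the arc-length parameter $s_1$ of $\gamma_1$, so that symbols such as $k_{(2,g)}'$ stand for $\tfrac{d}{ds_1}\bigl(k_{(2,g)}(s_2(s_1))\bigr)$ and the chain rule is already absorbed into the differentiated velocity identity. Once this convention is in place, the substitutions above close the proof by direct computation.
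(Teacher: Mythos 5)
Your derivation is correct and matches the route the paper intends (the corollary is presented there as a direct rewriting of Proposition \ref{formulona} via $s_2'k_{(2,g)}=\delta'+k_{(1,g)}$ and its $s_1$-derivative, which is exactly what you do). Your explicit remark that all primes are taken with respect to $s_1$, so that $s_2''k_{(2,g)}+s_2'k_{(2,g)}'=(s_2'k_{(2,g)})'=\delta''+k_{(1,g)}'$, is the one point the paper leaves implicit, and it is the correct reading.
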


\begin{remark}
By Lemma \ref{propdelta} and Lemma \ref{velocity}, $k_{(2,n,\bar{S})}$, $\tau_{(2,r,\bar{S})}$ and $s_2'$ depend only on the prescribed foldlines and on the values of $k_{(1,n,\bar{S})}$ and $\tau_{(1,r,\bar{S})}$ and their first derivative at the point of interest. By this, $s_2''$ depends on the derivatives of $k_{(1,n,\bar{S})}$ and $\tau_{(1,r,\bar{S})}$ up to the second order.
\end{remark}

\begin{obs} \label{appro}
If the two foldlines $\bar{\gamma}_1$ and $\bar{\gamma}_2$ are very close to each other, for example $\bar{\gamma}_2$ being a very gentle offset of $\bar{\gamma}_1$ in the direction of $\bar{\gamma}_1''$, we can approximate $k_{(1,g)} \sim k_{(2,g)}$, $k_{(2,n,\bar{S})} \sim k_{(1,n,\bar{S})}$, $\tau_{(2,r,\bar{S})} \sim \tau_{(1,r,\bar{S})}$ and $\delta \sim 0$ to obtain
$$
\tau_{(2,r,S)}  \sim \tau_{(1,r,\bar{S})} -2\frac{k_{(1,n,\bar{S})}k_{(1,g)}'-k_{(1,n,\bar{S})}'k_{(1,g)}}{\Big(k_{(1,n,\bar{S})}^2+k_{(1,g)}^2 \Big)} \sim \tau_{(1,r,S)}.
$$
This, matching the expression for the relative torsion from Lemma \ref{fliptors}, shows that in this extreme setting the developable on the other side of $M$ with respect to $\gamma_2$ (the new one we want to define) is approximately a continuation of the developable on the other side of $M$ with respect to $\gamma_1$. A more formal discussion of this behaviour will be given in \S \ref{secprop2}.   
\end{obs}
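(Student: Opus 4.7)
The plan is to substitute the stated approximations directly into the compact formula for $\tau_{(2,r,S)}$ given in the Corollary, and then recognize the outcome as Lemma \ref{fliptors} applied to $\gamma_1$ on $M$. I interpret each $\sim$ as equality up to terms higher-order in a tacit small parameter measuring how gently $\bar{\gamma}_2$ offsets $\bar{\gamma}_1$; consistently with $\delta \sim 0$ in that regime, I also treat $\delta'$ and $\delta''$ as higher-order small.

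First I would simplify the prefactor of the bracket. Since $\delta' + k_{(1,g)} \sim k_{(1,g)}$ and $k_{(2,g)} \sim k_{(1,g)}$, we have $\bigl(k_{(2,g)}/(\delta'+k_{(1,g)})\bigr)^2 \sim 1$, while $k_{(2,n,\bar{S})}^2 + k_{(2,g)}^2 \sim k_{(1,n,\bar{S})}^2 + k_{(1,g)}^2$, so that the prefactor collapses to $2/\bigl(k_{(1,n,\bar{S})}^2 + k_{(1,g)}^2\bigr)$. Next I would treat the three inner terms one by one: the first, $k_{(2,n,\bar{S})}(\delta'' + k_{(1,g)}')$, becomes $k_{(1,n,\bar{S})} k_{(1,g)}'$ once $\delta'' \sim 0$; the second, $\tau_{(2,r,\bar{S})}(\delta'+k_{(1,g)})\delta'$, vanishes to leading order thanks to the isolated factor $\delta'$; and the third, $k_{(2,g)}\bigl(\cos(\delta)k_{(1,n,\bar{S})}' + \sin(\delta)\tau_{(1,r,\bar{S})}'\bigr)$, reduces to $k_{(1,g)} k_{(1,n,\bar{S})}'$ using $\cos\delta \sim 1$ and $\sin\delta \sim 0$. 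Assembling the pieces,
\begin{equation*}
\tau_{(2,r,S)} \sim \tau_{(1,r,\bar{S})} - 2\frac{k_{(1,n,\bar{S})} k_{(1,g)}' - k_{(1,g)} k_{(1,n,\bar{S})}'}{k_{(1,n,\bar{S})}^2 + k_{(1,g)}^2},
\end{equation*}
which is precisely the expression furnished by Lemma \ref{fliptors} for $\tau_{(1,r,S)}$, closing the chain $\tau_{(2,r,S)} \sim \tau_{(1,r,S)}$.

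The main obstacle, and the reason I expect this to remain flagged as an Observation rather than elevated to a proper lemma, lies in making $\sim$ uniform: there is no single small parameter exhibited in the statement, and the cascade $\delta \sim 0 \Rightarrow \delta' \sim 0 \Rightarrow \delta'' \sim 0$ is not automatic at the level of the approximation symbols alone. A formal version would require quantifying how an offset parameter controls $\delta$ together with its first two derivatives, and then tracking the higher-order remainders through each of the reductions above. This is precisely the scaling and convergence analysis postponed to \S \ref{secprop2}, which is where the present heuristic picks up its rigorous counterpart.
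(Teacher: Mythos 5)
Your computation is correct and is essentially the paper's own (implicit) derivation: the Observation is obtained precisely by substituting the stated approximations, together with $\delta',\delta''\sim 0$, into the compact formula of the Corollary to Proposition \ref{formulona} and recognizing the result as the Lemma \ref{fliptors} expression for $\tau_{(1,r,S)}$. Your closing caveat about the lack of a uniform small parameter matches the paper's reason for deferring the rigorous version to \S \ref{secprop2}.
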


\section{Propagation to several foldlines} \label{secprop2}
In this section we discuss how for any natural number $N$, by choosing a family of uniformly rescaled foldlines close enough to each other, it is possible to propagate the proper fold along the first foldline onto an arbitrary ridge to the remaining $N-1$ foldlines in the way we described in \S \ref{secprop1}. Fig.\ \ref{family} provides a visualization of what we mean with \textit{uniformly rescaled}; the rigorous definition of such a family is given directly in Theorem \ref{thinenough}.

\begin{figure}
\begin{tikzpicture}
          \node[anchor=south west,inner sep=0] at (0,0) {\scalebox{0.92}{\includegraphics[width=\textwidth]{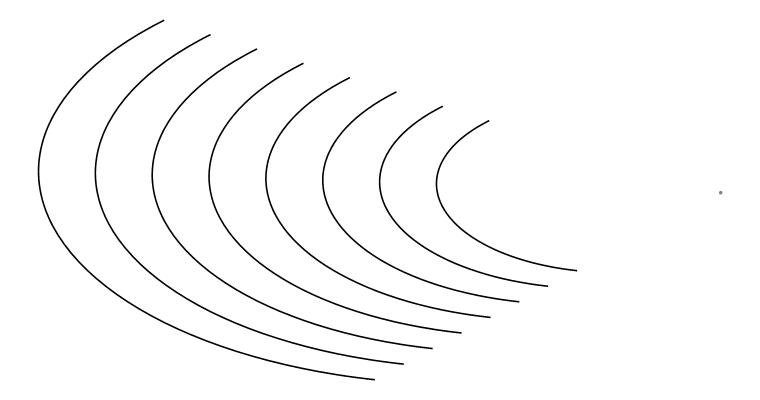}}};
    \node at (7.5,4.55) {$\bar{\gamma}_1$};
    \node at (3.95,5.65) {$\bar{\gamma}_j$};
    \node at (11.25,3.15) {$p$};
\end{tikzpicture}
\caption{Family of foldlines obtained by rescaling $\bar{\gamma}_1$ with respect to the center $p$.} \label{family}
\end{figure}

\begin{theorem} \label{thinenough}
Let $\bar{\gamma}_1$ be a $C^\infty$ foldine with nonzero curvature along which a proper fold onto the $C^\infty$ ridge $\gamma_1$ is locally well defined. Let $p\in\mathbb{R}^2$ be such that no ray $\bar{\gamma}_1-p$ is parallel to $\bar{\gamma}_1'$ then, for any $N \in \mathbb{N}$, there exists a scaling factor $\bar{c}>0$ such that for all $0<c<\bar{c}$ the proper fold along $\bar{\gamma}_1$ propagates to the family of foldlines $\bar{\gamma}_j=(1+(j-1)\cdot c)(\bar{\gamma}_1-p)+p$ for $1<j\leq N$, possibly restricting the definition domain $[a_j,b_j]$ of $\bar{\gamma}_j$ to $[a_j+\rho_{j}(c),b_j-\rho_{j}(c)]$ with $\lim_{c\to 0}\rho_{j}(c)=0$.
\end{theorem}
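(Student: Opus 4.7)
The plan is to induct on $j$, exploiting the fact that as $c\to 0$ all foldlines $\bar{\gamma}_j$ collapse onto $\bar{\gamma}_1$, so each step of the propagation machinery of \S \ref{secprop1} becomes the identity in the limit. The conditions guaranteeing a well-defined propagation — rulings reaching the next foldline before the regression curve, crossing it transversally, and inducing nonzero normal curvature on the next ridge — are open in the geometric data, so they will persist for small $c > 0$. The hypothesis on $p$ plays only one role: it ensures that the angle between $\bar{\gamma}_1(s) - p$ and $\bar{\gamma}_1'(s)$ is uniformly bounded away from $0$ and $\pi$ on the compact parameter interval, so that $\bar{\gamma}_j = \bar{\gamma}_1 + (j-1)c(\bar{\gamma}_1 - p)$ is, at corresponding parameters, a genuinely transverse offset with $\bar{\gamma}_j' = \bar{\gamma}_1' + O(c)$ and $k_{(j,g)}$ obtained from $k_{(1,g)}$ by the appropriate inverse scaling, uniformly in $s$.

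For the base case $j = 1 \to j = 2$, I would verify three open conditions: that the distance $\bar{v}$ along each ruling of $\gamma_1$ stays below the regression-curve distance $\sin(\beta_{(1,S)})/(\beta_{(1,S)}'+k_{(1,g)})$ supplied by Lemma \ref{ktaureg}; that the crossing angle $\beta_{(2,S)}$ does not degenerate to $0$ or $\pi$; and that the propagated $k_{(2,n,S)}$ from Lemma \ref{ktaureg} remains nonzero, so the new fold is proper. At $c=0$ all three hold trivially, and by Lemmas \ref{propdelta}, \ref{ktaureg}, \ref{velocity} the propagated quantities depend continuously on $c$ and converge uniformly to their first-stage counterparts on the parameter interval — this is already the content of Observation \ref{appro}. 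Hence the three conditions hold on the whole interval except a neighbourhood of the endpoints of length $O(c)$, corresponding to rulings that exit the lateral boundary of the strip between $\bar{\gamma}_1$ and $\bar{\gamma}_2$; this furnishes the trimming $\rho_2(c)\to 0$. Proposition \ref{formulona} then produces the normal curvature and relative torsion of the newly induced developable on the side opposite to $M$ at $\gamma_2$.

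The inductive step repeats the argument verbatim on the triple $(\gamma_j, \bar{\gamma}_j, \bar{\gamma}_{j+1})$. The delicate point — and the real obstacle — is the strength of the induction hypothesis: by the Remark following Proposition \ref{formulona}, one propagation step consumes derivatives of order up to two of the previous stage, so after $N-1$ iterations one must track $C^{2(N-1)}$-convergence of the first-stage data to the corresponding data at each later stage. Since the initial foldline and ridge are $C^\infty$, this derivative bookkeeping costs nothing, but it is the reason the statement assumes $C^\infty$ rather than a specific finite regularity, and it is what forces the induction hypothesis to be formulated in a strong topology rather than merely pointwise. Compactness of the parameter interval then upgrades pointwise continuity in $c$ to uniform continuity, and because $N$ is finite one may choose a single $\bar{c} > 0$ small enough that all $N-1$ propagation steps simultaneously satisfy their open conditions; the individual trimmings $\rho_j(c)$ each vanish with $c$, since each merely measures the width at which rulings fail to traverse a strip whose width is itself $O(c)$.
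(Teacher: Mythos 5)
Your proposal is correct and follows essentially the same route as the paper: an induction over the foldlines in which, for small $c$, each stage's geometric data converges to that of the first stage, with the crucial observation that each application of Proposition \ref{formulona} consumes two derivatives, so the inductive hypothesis must control derivatives up to an arbitrary finite order (the paper's $H\mapsto H-2$ bookkeeping), with compactness and finiteness of $N$ supplying a single $\bar{c}$. The paper packages the $c\to 0$ convergence of the angles $\beta_j$, of the reparametrization speeds $s_{j+1}'$, and of their higher derivatives into a separate technical lemma, but the substance matches your argument.
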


For the sake of clarity, we proceed by presenting a technical lemma before providing the actual proof of the theorem, which is essentially obtained as a consquence of Observation \ref{appro} plus some work to make the argument rigorous. Given a scaling factor $c$, in the following we will always assume the family $\bar{\gamma}_j$ defined as in the statement of the theorem. If $s_1$ is the arc-length parameter of $\bar{\gamma}_1$ then the arc-length parameter $s_j$ of $\bar{\gamma}_j$ can be expressed as $l_j(s_1)=s_1/(1+(j-1)\cdot c)$.

Although the statements of Theorem \ref{thinenough} and Lemma \ref{technical} are given for a positive value of $c$, this is just for convenience, and analogous conclusions hold for rescaled copies of $\bar{\gamma}_1$ on the same side of $p$ ($c<0$).

\begin{lemma} \label{technical}
Let $\bar{\gamma}_1$ be a $C^\infty$ planar curve with nonzero curvature parametrized by arc-length over the interval $I$. Assume that $p\in\mathbb{R}^2$ is such that no ray $\bar{\gamma}_1-p$ is parallel to $\bar{\gamma}_1'$. If $\bar{r}$ is a $C^\infty$ family of ruling directions defined on $I$, such that no direction $\bar{r}$ is parallel to $\bar{\gamma}_1'$, then for any open interval $A \subset I$ and any $j \in \mathbb{N}$ there exists $\bar{c}>0$ such that for any $0<c<\bar{c}$ the family of rulings direction $\bar{r}$ identifies a bijection between $\bar{\gamma}_j\bigl(l_j(A)\bigl)$ and $\bar{\gamma}_{j+1}\bigl(s_{j+1}\bigl(l_j(A)\bigl)\bigl)$, constructed by considering the intersection of the line $\bar{\gamma}_j\bigl(l_j(s_1)\bigl) + v\cdot \bar{r}(s_1)$ with the curve $\bar{\gamma}_{j+1}$ on the side pointed by $\bar{\gamma}_j-p$. 

Moreover, if $\beta_j$ and $\beta_{j+1}$ are the angles between $\bar{r}$ and $\bar{\gamma}_j'$ and $\bar{\gamma}_{j+1}'$ respectively, then 
\begin{equation*}
\lim_{c \to 0} \abs{\beta_{j+1}^{(h)}\bigl(s_{j+1}(s_j)\bigl)-\beta_{j}^{(h)}(s_j)} = 0, \;\;\;\;\; \forall \, 0 \leq h \in\mathbb{N}, \;\; \text{and}
\end{equation*}
\begin{equation*}
\begin{split}
 \lim_{c\to 0} \abs{s_{j+1}'-1} = \lim_{c\to 0} \abs{s_{j+1}^{(h)}} = 0, \;\;\;\;\; \forall \, 2\leq h \in \mathbb{N}, 
\end{split}
\end{equation*}
Here all derivatives are taken with respect to $s_j$ and $\bar{r}$ is considered as the function $\bar{r}\bigl(l_j^{-1}(s_j)\bigl)$. 
\end{lemma}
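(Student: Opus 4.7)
The plan is to parametrize the intersection data by $(s_1, c)$ via the Implicit Function Theorem, obtaining $C^\infty$ dependence on $c$ that reduces to the identity at $c = 0$; the convergence statements then follow from smooth dependence on $c$ together with chain-rule bookkeeping. Concretely, fix a relatively compact $A' \supset \overline{A}$ with $\overline{A'} \subset I$ and define
$$
F(v, t, s_1, c) := v\, \bar{r}(s_1) - \bigl(1+(j-1)c\bigr)\bigl(\bar\gamma_1(t) - \bar\gamma_1(s_1)\bigr) - c\bigl(\bar\gamma_1(t) - p\bigr),
$$
whose vanishing encodes that the ruling from $\bar\gamma_j\bigl(l_j(s_1)\bigr)$ in direction $\bar{r}(s_1)$ meets $\bar\gamma_{j+1}$ at $\bar\gamma_{j+1}\bigl(l_{j+1}(t)\bigr)$. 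At $c = 0$ the obvious solution is $(v, t) = (0, s_1)$, and the partial Jacobian $\partial_{(v,t)} F = [\bar{r}(s_1),\, -\bar\gamma_1'(t)]$ is nondegenerate precisely because $\bar{r}(s_1)$ is not parallel to $\bar\gamma_1'(s_1)$. As this transversality holds uniformly on $\overline{A'}$, a compactness argument combined with the Implicit Function Theorem produces $\bar c > 0$ and $C^\infty$ functions $v(s_1, c),\, t(s_1, c)$ on $\overline{A'} \times (-\bar c, \bar c)$ satisfying $F = 0$, with $v(s_1, 0) = 0$ and $t(s_1, 0) = s_1$. Differentiating $F = 0$ in $c$ at $c = 0$ shows that the leading-order offset $v\, \bar{r} \sim c\,(\bar\gamma_1 - p)$ projected onto the normal to $\bar\gamma_1'$ has the sign of $\bar\gamma_1 - p$, so for $0 < c < \bar c$ the IFT solution picks out the intersection on the side pointed by $\bar\gamma_j - p$; shrinking $\bar c$ further to exclude distant intersections on the same ruling line yields the claimed bijection.

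For the convergence of $s_{j+1}$, write $s_{j+1}(s_j, c) = l_{j+1}\bigl(t\bigl(l_j^{-1}(s_j), c\bigr)\bigr)$. Since $l_j$, $l_{j+1}$ are linear maps whose scaling factors tend to $1$ as $c \to 0$, and $t(s_1, c)$ is $C^\infty$ with $t(s_1, 0) = s_1$, the map $(s_j, c) \mapsto s_{j+1}(s_j, c)$ is $C^\infty$ on a neighborhood of $\overline{A} \times \{0\}$ and satisfies $s_{j+1}(s_j, 0) \equiv s_j$ identically in $s_j$. Hence $\partial_{s_j}^h s_{j+1}(\cdot, 0) = \delta_{h,1}$, and continuity of all partial derivatives in $c$ gives $s_{j+1}'(s_j, c) \to 1$ and $s_{j+1}^{(h)}(s_j, c) \to 0$ for $h \geq 2$, uniformly on $\overline{A}$.

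For the angles, note that $\beta_j(s_j)$ is the angle between $\bar{r}\bigl(l_j^{-1}(s_j)\bigr)$ and $\bar\gamma_j'(s_j)$, while at the corresponding point the angle $\beta_{j+1}\bigl(s_{j+1}(s_j)\bigr)$ is formed by the same ruling direction and $\bar\gamma_{j+1}'\bigl(s_{j+1}(s_j)\bigr)$. At $c = 0$ both tangents equal $\bar\gamma_1'(s_j)$, so $\beta_{j+1}\bigl(s_{j+1}(s_j)\bigr) - \beta_j(s_j)$ vanishes identically in $s_j$. Since this difference depends $C^\infty$-smoothly on $(s_j, c)$, all of its $s_j$-derivatives are continuous in $c$ and vanish at $c = 0$, hence tend to $0$ as $c \to 0$. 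If one interprets $\beta_{j+1}^{(h)}\bigl(s_{j+1}(s_j)\bigr)$ as an intrinsic derivative of $\beta_{j+1}$ at $s_{j+1}(s_j)$ rather than as an iterated $s_j$-derivative of the composite, a Faà di Bruno expansion writes $\beta_{j+1}^{(h)}\bigl(s_{j+1}(s_j)\bigr)$ as a polynomial in the composite derivatives $\partial_{s_j}^k\bigl[\beta_{j+1}\circ s_{j+1}\bigr]$ and in $s_{j+1}^{(\ell)}$ for $k, \ell \leq h$; combining the composite-derivative limit with the asymptotics $s_{j+1}' \to 1$, $s_{j+1}^{(\ell)} \to 0$ proved above gives $\beta_{j+1}^{(h)}\bigl(s_{j+1}(s_j)\bigr) \to \beta_j^{(h)}(s_j)$ by induction on $h$. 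The main technical obstacle is the uniform Implicit Function Theorem together with $C^\infty$-smooth parameter dependence; once this is secured, the rest is chain-rule propagation of limits.
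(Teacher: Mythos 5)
Your proof is correct, but it takes a genuinely different route from the paper's. The paper reads the limits off the formulas it has already established: the $h=0$ angle statement from continuity of the tangents, $\lim_{c\to 0} s_{j+1}'=1$ from the velocity formula of Lemma \ref{velocity} (since the ruling length $v(c,s_1)\to 0$), $\lim_{c\to 0}\beta_{j+1}'=\lim_{c\to 0}\beta_j'$ from the relation $s_{j+1}'=(\delta'+k_j)/k_{j+1}$ of Lemma \ref{propdelta}, and the higher orders by an induction that alternately differentiates these two identities. You instead encode the ruling--foldline intersection as the zero set of $F(v,t,s_1,c)$, apply the implicit function theorem uniformly on a compact neighbourhood of $\overline{A}\times\{0\}$, and note that at $c=0$ the solution is the identity $(v,t)=(0,s_1)$; every limit in the statement then follows from joint $C^\infty$ dependence on $(s_1,c)$, because the relevant quantities and all their $s_j$-derivatives are continuous in $c$ and take the claimed values at $c=0$. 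Your route has two concrete advantages: it actually proves the bijection claim, which the paper's proof essentially skips, and it makes explicit where the hypothesis that no ray $\bar{\gamma}_1-p$ is parallel to $\bar{\gamma}_1'$ enters, namely in the sign of $\partial_c v$ at $c=0$, which selects the intersection on the side pointed by $\bar{\gamma}_j-p$; it also delivers all orders $h$ at once, uniformly on $\overline{A}$, without the paper's somewhat implicit induction. What it gives up is the explicit link to the geometric descriptors ($\bar{v}$, $\beta_j$, $k_j$) that the paper's formula-based argument keeps in view and reuses in the proof of Theorem \ref{thinenough}. Two small points you should make explicit: injectivity of $s_1\mapsto t(s_1,c)$ for small $c$ (hence bijectivity) follows from $\partial_{s_1}t\to 1$ uniformly, and ruling out other intersections of the ruling line with $\bar{\gamma}_{j+1}$ uses compactness of $\overline{A'}$ to bound them away from $v=0$.
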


\begin{proof}
By the definition of $\bar{\gamma}_j$, since by continuity $\lim_{c \to 0} \bar{\gamma}_{j+1}'\bigl(l_{j+1}(s_1)\bigl)=\lim_{c \to 0} \bar{\gamma}_{j}'\bigl(l_j(s_1)\bigl)=\bar{\gamma}'(s_1)$ we have
$$
\lim_{c \to 0} \abs{\beta_{j+1}\bigl(s_{j+1}(s_j)\bigl)-\beta_{j}(s_j)} = 0.
$$
By the expression for the velocity given in Lemma \ref{velocity} we obtain, for $v(c,s_1)$ such that $\bar{\gamma}_j\bigl(l_j(s_1)\bigl) + v(c,s_1)\cdot \bar{r}(s_1)=\bar{\gamma}_{j+1}\bigl(s_{j+1}\bigl(l_j(s_1)\bigl)\bigl)$,
$$
\lim_{c\to 0} s_{j+1}'= \lim_{c\to 0} \frac{\sin\beta_j}{\sin\beta_{j+1}}\bigg(1-v(c,s_1)\frac{\beta_{j}'+k_{j}}{\sin(\beta_{j})} \bigg)=1,
$$
where $k_{j}$ is the curvature of the $j$-th foldline. For the derivatives of higher order of $s_{j+1},\beta_{j+1}$ and $\beta_j$ we recall from Lemma \ref{propdelta} that $s_{j+1}'=\bigl(k_j+(\beta_j-\beta_{j+1})\bigl)/k_{j+1}$ and therefore
$$
\lim_{c \to 0} \beta_{j+1}'= \lim_{c \to 0} \beta_{j}'-s_{j+1}'k_{j+1}+k_{j} = \lim_{c \to 0} \beta_{j}'.
$$
The statement follows by alternately taking the derivatives with respect to $s_j$ of the expressions for $s_{j+1}'$ and $\beta_{j+1}'$ (induction on the derivatives of lower order).
\end{proof}

We are now ready to prove Theorem \ref{thinenough}.

\begin{proof}[Proof of Theorem \ref{thinenough}]
We proceed by induction, assuming the existence of $\bar{c}$ such that the proper fold along $\bar{\gamma}_1$ onto $\gamma_1$ propagates to the first $N$ foldlines for all $0<c<\bar{c}$. More precisely, the inductive hypothesis we want to iterate claims that for $0<c<\bar{c}$ a bijection between $\bar{\gamma}_j(s_j)$ and $\bar{\gamma}_{j+1}\bigl(s_{j+1}(s_j)\bigl)$ (or equivalently between ${\gamma}_j$ and ${\gamma}_{j+1}$) is identified by the rulings of the developable between the curves for $j<N$, possibly restricting to suitable open sets. Besides, fixed any $\varepsilon>0$ and $H\in\mathbb{N}$ we also ask that in the same range of $c$ the normal curvature and the relative torsion of the ridge $\gamma_j$ with respect to the developable on the opposite side of the scaling center $p$ satisfy 
\begin{equation*}
\begin{split}
\abs{k_{(j,n,S_{j})}^{(h)}\bigl(s_j(s_{j-1}(...s_2(s_1)))\bigl)-k_{(1,n,S_{j})}^{(h)}(s_1)}&<\varepsilon, \;\;\;\;\;   \\
\abs{\,\tau_{(j,r,S_{j})}^{(h)}\bigl(s_j(s_{j-1}(...s_2(s_1)))\bigl)\,-\,\tau_{(1,r,S_{j})}^{(h)}(s_1)}&<\varepsilon, \;\;\;\;\;  \forall \, j\leq N,h\leq H, \\
\end{split}
\end{equation*}
where $S_{j}$ has been assumed without loss of generality being $+$ or $-$ if $j$ is respectively odd or even. The derivatives are taken with respect to the arc-length parameter $s_j$ of the ridge of interest. Finally, still in the inductive hypothesis we ask for guarantees that the speed of the reparametrization mediated by the ruling $s_{j+1}(s_j)$ does not deviate too much from the arc-length, requiring also
\begin{equation*}
\begin{split}
\abs{s_{j+1}'-1}&<\varepsilon, \\
\abs{s_{j+1}^{(h)}}&<\varepsilon,  \;\;\;\;\;  \forall \, j< N,1<h\leq H. \\
\end{split}
\end{equation*}

Assuming the inductive hypothesis for $N\geq 1$, whose basis is simply provided by the knowledge that we can properly fold along $\bar{\gamma}_1$ onto $\gamma_1$, we show that it also holds for $N+1$. We carry over the proof for $N$ odd, the even case being analogous. Since the angle between a ruling and the tangent to the ridge is a continuous function of $k_{(N,n,+)}$ and $\tau_{(N,r,+)}$ and their derivatives, for any $\varepsilon>0$ and $H\in\mathbb{N}$, we can choose $\bar{c}$ small enough to guarantee that
$$
\abs{\beta_{(j,+)}^{(h)}\bigl(s_j(s_{j-1}(...s_2(s_1)))\bigl)-\beta_{(1,+)}^{(h)}(s_1)}<\varepsilon,  \;\;\;\;\;  \forall \, j\leq N,h\leq H.
$$    
The fold along $\bar{\gamma}_1$ being proper, the regression curve of the two developables on which $\gamma_1$ lies is at nonzero distance from the ridge along the ruling. If $\varepsilon$ is small enough, such a property passes over to the regression curves of the developables on the two sides of $\gamma_N$. We can require $\bar{c}$ to be small enough to have $\bar{\gamma}_{N+1}$ within the minimum of such distances and hence obtain the desired bijection $\gamma_{N+1}\bigl( s_{N+1}(s_N) \bigl)$ up to restriction of the definition domains. 

Next step is to control the normal curvature and the relative torsion of $\gamma_{N+1}$ with respect to the developable on the negative side. By making use of the inductive hypothesis on the speed of the bijections $s_j$ and possibly further decreasing $\bar{c}$, we have for $l_j(s_1)=s_1/(1+(j-1)\cdot c)$ 
$$
\abs{\beta_{(j,+)}^{(h)}\bigl(l_j(s_1)\bigl)-\beta_{(1,+)}^{(h)}(s_1)}<\varepsilon,  \;\;\;\;\;  \forall \, j\leq N,h\leq H.
$$
Hence, for $\bar{r}$ chosen to be the development of the rulings direction from $\gamma_1$ to $\gamma_2$, Lemma \ref{technical} together with the triangular inequality allows us to conclude the desired condition on the speed of the reparametrization $s_{N+1}(s_{N})$ and the following bounds on the angle $\beta_{(N+1,+)}$ and its derivatives 
$$
\abs{\beta_{(N+1,+)}^{(h)}\bigl(s_{N+1}(s_N(...s_2(s_1))\bigl)-\beta_{(1,+)}^{(h)}(s_1)}<\varepsilon.
$$
Finally, Proposition \ref{formulona} provides expressions for $k_{(N+1,n,-)}$ and $\tau_{(N+1,r,-)}$ which continuously depend on the values of $\beta_{(N,+)}$ and $\beta_{(N+1,+)}$ and their derivatives up to degree 2. For $\bar{c}$ small enough we can therefore guarantee that
\begin{equation*}
\begin{split}
\abs{k_{(N+1,n,-)} \hspace{-1pt} \bigl(s_{N+1}(...s_2(s_1)))\bigl)\hspace{-1pt} - \hspace{-1pt}\bigl(-k_{(1,n,+)}(s_1)\bigl)}&\hspace{-2pt} <\hspace{-2pt} \varepsilon, \;\;\;\;\;   \\
\abs{\tau_{(N+1,r,-)} \hspace{-1pt} \bigl(s_{N+1}(...s_2(s_1)))\bigl)\hspace{-1pt} - \hspace{-1pt}\Bigg(\hspace{-2pt} \tau_{(1,r,+)} \hspace{-1pt} - \hspace{-1pt} 2\frac{k_{(1,n,+)}k_{(1,g)}'-k_{(1,n,+)}'k_{(1,g)}}{\Big(k_{(1,n,+)}^2+k_{(1,g)}^2 \Big)}\hspace{-2pt} \Bigg)  (s_1)}&\hspace{-2pt} < \hspace{-2pt} \varepsilon,
\end{split}
\end{equation*}
where $k_{(1,g)}$ is the geodesic curvature of $\bar{\gamma}_1$. Again exploiting the condition on the speed of the bijections $s_j$ up to $j=N+1$, the inductive hypothesis on the derivative of $\beta_{(N+1,+)}$ and possibly making $\bar{c}$ smaller enough, for any $H$ we conclude the desired bounds
\begin{equation*}
\begin{split}
\abs{k_{(N+1,n,-)}^{(h)}\bigl(s_{N+1}(s_{j-1}(...s_2(s_1)))\bigl)-k_{(1,n,-)}^{(h)}(s_1)}&<\varepsilon, \;\;\;\;\;   \\
\abs{\,\tau_{(N+1,r,-)}^{(h)}\bigl(s_{N+1}(s_{j-1}(...s_2(s_1)))\bigl)\,-\,\tau_{(1,r,-)}^{(h)}(s_1)}&<\varepsilon, \;\;\;\;\;  \forall \, h\leq H-2. \\
\end{split}
\end{equation*}
This completes the inductive step and with that the proof of the theorem. 
\end{proof}

\begin{obs}
If $\bar{\gamma}_1$ is a closed convex curve then the scaling center $p$ must be in its interior and no restriction of the definition domains is ever needed, once  a scaling factor $c$ small enough to guarantee the propagation has been found.
\end{obs}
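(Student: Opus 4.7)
The plan is to verify the two assertions of the observation separately, each being a short consequence of combining the hypotheses of Theorem \ref{thinenough} with the closedness and convexity of $\bar{\gamma}_1$.

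For the claim that $p$ must be interior, the hypothesis of Theorem \ref{thinenough} -- that no vector $\bar{\gamma}_1(s) - p$ is parallel to $\bar{\gamma}_1'(s)$ -- translates directly to the condition that no tangent line to $\bar{\gamma}_1$ passes through $p$. I would then argue the contrapositive: any $p$ lying on or outside the closed convex region bounded by $\bar{\gamma}_1$ admits such a tangent line. If $p$ lies on the curve, the tangent at $p$ trivially contains $p$; if $p$ lies strictly outside the convex region, a standard support-line argument (rotate a line through $p$ until it meets the convex region at a supporting position) produces a point of tangency of the curve whose tangent passes through $p$. Because the theorem assumes $\bar{\gamma}_1$ is $C^\infty$ with nonzero curvature, the curve is strictly convex, so every support line touches at a unique point and that point is a genuine tangency. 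Hence $p$ must lie strictly inside.

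For the claim that no domain restriction is ever needed, I would observe that the trimmed intervals $[a_j+\rho_j(c),b_j-\rho_j(c)]$ appearing in Theorem \ref{thinenough} are imposed solely to guarantee that the ruling from $\bar{\gamma}_j$ meets $\bar{\gamma}_{j+1}$ within the latter's parametrization interval -- a purely endpoint-driven phenomenon. If $\bar{\gamma}_1$ is a closed convex curve and $p$ is interior, then each rescaled $\bar{\gamma}_j$ is again closed and convex, with $\bar{\gamma}_j$ strictly enclosed by $\bar{\gamma}_{j+1}$ for $c>0$. Since $\bar{\gamma}_{j+1}$ is a Jordan curve surrounding $\bar{\gamma}_j$ with empty boundary, every ruling emanating outward from $\bar{\gamma}_j$ crosses $\bar{\gamma}_{j+1}$, and the first-intersection rule defines the bijection globally on the entire period. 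Once $\bar{c}$ is chosen small enough -- in exactly the sense supplied by Theorem \ref{thinenough} -- so that the rulings reach $\bar{\gamma}_{j+1}$ before hitting the regression curve, no trimming of the parameter interval is necessary at any step of the propagation.

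The main obstacle is the first assertion: it requires invoking strict convexity carefully from the standing $C^\infty$-plus-nonzero-curvature hypothesis before the support-line argument applies cleanly. The second assertion is then essentially a topological remark -- closed loops have empty boundary, so the endpoint obstructions that $\rho_j(c)$ controls in the general theorem simply do not arise in the closed convex case.
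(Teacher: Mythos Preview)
The paper records this as an observation without proof, so there is no argument to compare against; your proposal supplies a correct and natural justification. The support-line argument for locating $p$ is standard (and your care in invoking strict convexity from the nonzero-curvature hypothesis is appropriate), while your reading of the domain restriction in Theorem~\ref{thinenough} as a purely endpoint-driven phenomenon that disappears for closed curves is exactly the intended point.
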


\begin{obs}
If we are not interested in mantaining a constant scaling factor $c$, then it is easy to propagate a proper fold onto an arbitrary ridge to infinitely many additional foldlines. We can in fact just proceed by induction: once the $n$-th proper fold is determined we prescribe the $(n+1)$-th foldline by scaling the previous one by a factor small enough to make the curve contained in the interior of the domain on which the isometry identifying the next developable is well-defined.
\end{obs}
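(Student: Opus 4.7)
The plan is to proceed by induction on $n$, the base case being the given proper fold of $\bar{\gamma}_1$ onto $\gamma_1$. Assuming at stage $n$ we have a proper fold of $\bar{\gamma}_n$ onto a ridge $\gamma_n$ with outward developable $M_n$, the inductive step produces $\bar{\gamma}_{n+1}$ as a small rescaling of $\bar{\gamma}_n$ and propagates the fold onto it via Lemma \ref{propdelta} and Proposition \ref{formulona}.

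For the inductive step, I would first use Lemma \ref{secondcurv} to conclude that $M_n$ has only parabolic points, so that by the formula for the regression curve in \S \ref{local} the preimage in the planar development is at strictly positive (possibly infinite) distance from $\bar{\gamma}_n$ along every ruling direction. This identifies an open planar strip $D_n$ adjacent to $\bar{\gamma}_n$ on the outward side on which the ruling map of the developed $M_n$ is a well-defined diffeomorphism. I would then pick any point $p_n$ such that no ray from $p_n$ meets $\bar{\gamma}_n$ tangentially (for instance, an interior point of the convex hull if $\bar{\gamma}_n$ is convex, or any generic point otherwise), and consider the rescaled curve $\bar{\gamma}_{n+1}=(1+c_n)(\bar{\gamma}_n-p_n)+p_n$. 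For $c_n$ small enough, $\bar{\gamma}_{n+1}$ is contained in $D_n$, does not meet $\bar{\gamma}_n$, and crosses each ruling transversely; the transversality is immediate from Lemma \ref{rulingangle}, which guarantees that at $c_n=0$ the ruling-to-ridge angle $\beta$ is bounded away from $0$ and $\pi$, together with continuity of the ruling field and the fact that $\bar{\gamma}_{n+1}'$ converges uniformly to $\bar{\gamma}_n'$ (up to reparametrization) as $c_n\to 0$.

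Under these conditions, restricting the planar isometry defining $M_n$ to $\bar{\gamma}_{n+1}$ produces a space curve $\gamma_{n+1}$ on $M_n$. Parabolicity of $M_n$ and transversality with the rulings imply that $\gamma_{n+1}$ has nonzero normal curvature with respect to $M_n$, while its geodesic curvature coincides with the (nonzero) curvature of the planar $\bar{\gamma}_{n+1}$. The hypotheses of Lemma \ref{foldridge} are therefore met, and Proposition \ref{formulona} yields a unique proper fold along $\bar{\gamma}_{n+1}$ onto $\gamma_{n+1}$ consistent with $M_n$, producing the new outward developable $M_{n+1}$ and closing the induction.

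The argument is essentially obstacle-free: unlike Theorem \ref{thinenough}, no quantitative control on the sequence $\{c_n\}$ is required, because each $c_n$ is chosen \emph{after} $M_n$ is known and can be made as small as needed in terms of the current regression distance and current ruling field. The only routine point to verify is that the transversality between $\bar{\gamma}_{n+1}$ and the rulings of $M_n$ persists under the chosen rescaling, which follows from the continuous dependence described above.
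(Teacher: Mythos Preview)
Your proposal is correct and follows the same inductive line as the paper; the paper's own justification is simply the single sentence embedded in the observation, and you have supplied the details (parabolicity via Lemma~\ref{secondcurv}, positive distance to the regression curve, transversality of the rescaled foldline to the rulings, and the invocation of Lemma~\ref{foldridge}/Proposition~\ref{formulona}) that the paper leaves implicit. Your added care about transversality and the choice of scaling center $p_n$ is sound and not at odds with the paper's sketch.
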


\section{Why the propagation to infinitely many prescribed foldlines is hard} \label{infinite} In this section we show that the propagation of a proper fold can turn singular with an arbitrarily abrupt behaviour. More precisely, we will show that for any proper fold involving $N$ foldlines in the sense of \S \ref{secprop2}, we can construct a proper fold over the first $N-1$ foldlines whose ridges are arbitrarily close to those of the first fold up to the derivative of order 3 but such that a non-singular isometry between the $(N-1)$-th and the $N$-th foldline cannot be consistently constructed. This will provide evidence that in general inductive strategies taking into account only derivatives up to a finite order cannot be employed to guarantee the propagation of a curved fold to a prescribed infinite family of foldlines.

\begin{prep} \label{pathologic}
Let $\bar{\gamma}_j$ be a family of $N$ non-intersecting $C^{2N}$ foldlines such that the proper fold along $\bar{\gamma}_1$ onto the $C^{2N}$ ridge $\gamma_1$ propagates sequentially to the foldlines $2$ to $N$ identifying bijections between ridges $\gamma_j(s_j) \leftrightarrow \gamma_{j+1}(s_{j+1}(s_j))$ induced by the rulings correspondence. Then, there exists a ridge $\tilde{\gamma}_{1}$ such that a proper fold along $\bar{\gamma}_1$ onto $\tilde{\gamma}_{1}$ propagates, possibly up to restriction of the definition domains, to the foldlines $2$ to $N-1$, inducing ridges $\tilde{\gamma}_{j}$, but not to the $N$-th foldline, some of the rulings emanating from $\bar{\gamma}_{N-1}$ on the side of $\bar{\gamma}_{N}$ crossing the regression curve before hitting the last foldline. Moreover, for any $\varepsilon>0$, $\tilde{\gamma}_{1}$ can be chosen such that
\begin{equation*}
\begin{split}
& \max\abs{k_{(j,n,S_j)}-\tilde{k}_{(j,n,S_j)}}<\varepsilon, \\
& \max\abs{\,\tau_{(j,r,S_j)}-\tilde{\tau}_{(j,r,S_j)}\,}<\varepsilon, \;\; \forall \, 1 \leq j \leq N-1.
\end{split}
\end{equation*}
where $k_{(j,n,S_j)}$, $\tau_{(j,r,S_j)}$ and $\tilde{k}_{(j,n,S_j)}$, $\tilde{\tau}_{(j,r,S_j)}$ are the normal curvature and relative torsion respectively of $\gamma_j$ and $\tilde{\gamma}_j$ with respect to the developable emanating from the ridge $j$ on the side of $j+1$. 
\end{prep}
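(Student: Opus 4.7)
The plan is to perturb the initial ridge by a localized high-frequency, low-amplitude oscillation of its normal curvature. Such a perturbation will stay $C^0$-small on all intermediate ridges throughout the propagation, while its first derivatives at ridge $N-1$ can simultaneously be made arbitrarily large; the latter enter the regression-curve distance formula $\sin(\beta)/(\beta'+k_g)$ from \S\ref{local} through $\beta'$ and squeeze the regression curve of the outermost developable into the strip between $\bar{\gamma}_{N-1}$ and $\bar{\gamma}_{N}$, blocking the last propagation step.

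First, I would fix a nontrivial smooth bump $\phi$ supported in a small interval around an interior arc-length value $s_1^\ast$ of $\bar{\gamma}_1$, together with two parameters $\varepsilon,\omega>0$, and prescribe at ridge $1$ the perturbed data
\begin{equation*}
\tilde{k}_{(1,n,S_1)}(s_1):=k_{(1,n,S_1)}(s_1)+\varepsilon\,\phi(\omega s_1),\qquad \tilde{\tau}_{(1,r,S_1)}(s_1):=\tau_{(1,r,S_1)}(s_1).
\end{equation*}
The perturbed ridge $\tilde{\gamma}_1$ is then obtained, up to rigid motion, by integrating the Darboux frame equations of \S\ref{recall} with these data and with the given geodesic curvature $k_{(1,g)}$; by construction $\tilde{\gamma}_1$ coincides with $\gamma_1$ outside the small arc where $\phi(\omega s_1)$ is supported, and the $h$-th derivative of the perturbation of $k_{(1,n,S_1)}$ has sup norm $O(\varepsilon\,\omega^{h})$ with constants independent of $\varepsilon,\omega$. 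Iterating Proposition \ref{formulona} from ridge $j$ to ridge $j+1$ for $j=1,\dots,N-2$ then yields the ridges $\tilde{\gamma}_2,\dots,\tilde{\gamma}_{N-1}$. By the Remark following that Proposition, step $j+1$ data depend rationally on step $j$ data through at most two derivatives, with denominators uniformly bounded away from zero by continuity, so a telescoping estimate will yield
\begin{equation*}
\max\abs{\tilde{k}_{(j,n,S_j)}-k_{(j,n,S_j)}}+\max\abs{\tilde{\tau}_{(j,r,S_j)}-\tau_{(j,r,S_j)}}\,\leq\, K\,\varepsilon\,\omega^{2(j-1)},\quad 1\leq j\leq N-1,
\end{equation*}
for a constant $K$ that depends on the unperturbed data and on the fixed foldlines but not on $\varepsilon$ or $\omega$. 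For $\varepsilon\,\omega^{2(N-2)}$ small enough, properness of the perturbed fold through the first $N-1$ foldlines will follow from continuous dependence on the data, up to a shrinking of the definition domains that itself vanishes with $\varepsilon\,\omega^{2(N-2)}$.

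To force the failure at the last step, I would exploit that by Lemma \ref{rulingangle} the angle derivative $\tilde{\beta}'_{(N-1,S_{N-1})}$ is rational in $\tilde{k}_{(N-1,n,S_{N-1})}$, $\tilde{\tau}_{(N-1,r,S_{N-1})}$ and their first derivatives, so that its perturbation is of order $\varepsilon\,\omega^{2(N-2)+1}=\varepsilon\,\omega^{2N-3}$. Let $d$ denote the minimum ruling distance between $\bar{\gamma}_{N-1}$ and $\bar{\gamma}_N$ and $\varepsilon_0$ the target tolerance of the statement. I would choose $\omega$ large enough that the interval $(C\,\omega^{-(2N-3)},\,\varepsilon_0\,\omega^{-(2N-4)})$ is nonempty (possible because $2N-3>2N-4$) and pick any $\varepsilon$ in it: then $\varepsilon\,\omega^{2N-4}<\varepsilon_0$ gives the required $C^0$-tolerance at every intermediate ridge, while $\varepsilon\,\omega^{2N-3}>C$ makes $\tilde{\beta}'_{(N-1,S_{N-1})}+k_{(N-1,g)}$ exceed $\sin(\tilde{\beta})/d$ on a short subarc near the image of $s_1^\ast$. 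On that subarc the corresponding rulings cross the regression curve before reaching $\bar{\gamma}_N$, as required.

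The hard part will be verifying the telescoping estimate rigorously and tracking that the leading power of $\omega$ in the perturbation of the ridge-$j$ data is exactly $\omega^{2(j-1)}$ with $\omega$-independent implicit constants; this reduces to inspecting the explicit rational structure of Proposition \ref{formulona} and exploiting the uniform non-degeneracy of its denominators granted by continuity and by the properness of the unperturbed fold. The thin but decisive asymmetry $2N-3>2N-4$ between the exponent relevant for the failure mechanism and the one relevant for $C^0$ stability is also the structural reason why no inductive strategy monitoring only finitely many derivatives can prove propagation to an infinite family of foldlines.
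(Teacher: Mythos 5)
Your overall mechanism is the same one the paper uses: a localized perturbation whose effect on the data of ridges $1$ through $N-1$ is controlled in $C^0$ while one extra derivative, entering $\beta_{(N-1)}'$ and hence the regression-curve distance $\sin(\beta)/(\beta'+k_g)$, is made to blow up, and the window $(C\,\omega^{-(2N-3)},\varepsilon_0\,\omega^{-(2N-4)})$ is exactly the ``one derivative of asymmetry'' the paper exploits. However, you perturb the wrong datum, and this creates a genuine gap. Your blow-up argument needs a \emph{lower} bound: the top derivative $k_1^{(2N-3)}$ of the perturbed normal curvature must enter $\tilde{\tau}_{(N-1,r)}'$ (equivalently $\tilde{\beta}_{(N-1)}'$) with a coefficient bounded away from zero at the bump location. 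Tracing one propagation step, the second derivative of the ridge data enters $\tau_{(j+1,r)}$ only through $s_{j+1}''$, i.e.\ through $\beta_j''=\bigl((\tau_{(j,r)}'k_{(j,n)}-k_{(j,n)}'\tau_{(j,r)})/(\tau_{(j,r)}^2+k_{(j,n)}^2)\bigr)'$, where the coefficient of $\tau_{(j,r)}''$ is proportional to $k_{(j,n)}$ (nonzero by properness) but the coefficient of $k_{(j,n)}''$ is proportional to $\tau_{(j,r)}$, which properness does \emph{not} keep away from zero. For a rotationally symmetric cone-like fold one has $\tau_{(j,r)}\equiv 0$, and then your leading term $\varepsilon\,\omega^{2(j-1)}$ simply does not appear: the exponent you claim is an upper bound but not a lower bound, and the regression curve need not move at all.

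This is precisely why the paper's Lemma \ref{formal} isolates the linear dependence of $\tau_{(j,r,S_j)}$ on $\tau_1^{(2j-2)}$ with an everywhere nonvanishing coefficient $f_j$, and why the actual proof perturbs the high derivative $\tau_1^{(2N-3)}$ of the \emph{relative torsion} (via a steep bump whose antiderivatives stay uniformly small) rather than the normal curvature. To repair your argument you would either have to switch the perturbation to $\tau_{(1,r,S_1)}$ and prove the analogue of Lemma \ref{formal}, or add the unjustified hypothesis that the chain of coefficients (essentially a product involving the intermediate relative torsions) is nonzero at some point of $\bar{\gamma}_1$, which does not follow from the assumptions of the proposition. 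The $C^0$-closeness half of your plan and the final choice of $(\varepsilon,\omega)$ are otherwise sound and parallel to the paper's use of continuity and compactness.
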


Again, we first provide a technical lemma.

\begin{lemma} \label{formal}
In the hypotheses of Theorem \ref{pathologic}, with the notation $k_1:=k_{(1,n,S_1)}$ and $\tau_1:=\tau_{(1,r,S_1)}$, for $j>1$ we have the equalities
\begin{equation*}
\begin{split}
\tau_{(j,r,S_j)}=\tau_1^{(2j-2)}\cdot & f_{j} \Big(s_1, k_1,k_1',...,k_1^{(2j-3)},\tau_1,\tau_1',...,\tau_1^{(2j-3)} \Big)\\ + & g_{j}\Big(s_1, k_1,k_1',...,k_1^{(2j-2)},\tau_1,\tau_1',...,\tau_1^{(2j-3)} \Big), \;\; \text{and}
\end{split}
\end{equation*}
$$
k_{(j,n,S_{j})}=  q_{j} \Big(s_1, k_1,k_1',...,k_1^{(2j-3)},\tau_1,\tau_1',...,\tau_1^{(2j-3)} \Big).
$$
Here $f_{j}$, $g_{j}$ and $q_{j}$ are $C^\infty$ functions depending only on the family of foldlines, and such that $f_{j}$ never attains the value $0$ if evaluated as above (the argument $s_1$ of $k_1$ and $\tau_1$, and $ s_{j}(s_{j-1}(...s_2(s_1))$ of $\tau_{(j,r,S_{j})}$, $k_{(j,n,S_{j})}$ have been omitted for brevity). 
\end{lemma}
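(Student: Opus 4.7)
I would prove the lemma by strong induction on $j$, carrying the full structural claim---the prescribed derivative-dependencies of $q_j,f_j,g_j$ together with $f_j\neq 0$---as the inductive hypothesis. The passage from ridge $j$ to ridge $j+1$ decomposes into two elementary moves: (i) propagation across the developable between $\gamma_j$ and $\gamma_{j+1}$ (Lemma \ref{propdelta}), which consumes one $s_j$-derivative of the Darboux data through $s_{j+1}'$ and $\delta'$; and (ii) the flip across the crease at $\bar{\gamma}_{j+1}$ (Lemma \ref{fliptors}), which consumes one further $s_{j+1}$-derivative of $k_{(j+1,n,\cdot)}$. Read in the $s_1$-variable by the chain rule, each move inflates the top order of the entries $k_1,\tau_1$ by exactly one, yielding the net increment of two per ridge announced in the statement.

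The base case $j=2$ follows from direct inspection of Proposition \ref{formulona}: the only term that can contribute $\tau_1''$ to $\tau_{(2,r,S_2)}$ is the $s_2''$-piece, and the dependence there is linear because $s_2'=(\delta'+k_{(1,g)})/k_{(2,g)}$ is itself linear in $\tau_1'$ through the closed form $\beta_{(1,\bar{S})}'=(\tau_1'k_1-k_1'\tau_1)/(k_1^2+\tau_1^2)$ of Lemma \ref{rulingangle}. Factoring out this linearity gives $\tau_{(2,r,S_2)}=\tau_1^{(2)}f_2+g_2$ with $f_2$ proportional to $\partial_{\tau_1'}s_2'$, which does not vanish because $k_1\neq 0$ (proper fold) and rulings are nowhere tangent to the foldlines. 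In the inductive step, the leading piece $f_j\tau_1^{(2j-2)}$ of $\tau_{(j,r,S_j)}$ is first carried by move (i): $s_j$-differentiation and the chain rule produce $f_j\tau_1^{(2j-1)}/s_j'$ at leading order inside $\beta_{(j,S_j)}'$, and this linear top-order term propagates through $\delta'$, $s_{j+1}'$ and the rotation/rescaling of Lemma \ref{propdelta} into $k_{(j+1,n,\bar{S}_{j+1})}$ and $\tau_{(j+1,r,\bar{S}_{j+1})}$. Move (ii) then lifts the linear contribution one notch further, to $\tau_1^{(2j)}$, through the $k_{(j+1,n,\bar{S}_{j+1})}'$ correction in Lemma \ref{fliptors}, producing the factorisation $\tau_{(j+1,r,S_{j+1})}=\tau_1^{(2j)}f_{j+1}+g_{j+1}$. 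All strictly-lower-order remainders, together with the fresh $k_1^{(2j)}$ generated by the differentiation in move (ii), land inside $g_{j+1}$ and $q_{j+1}$ with precisely the dependencies claimed in the statement.

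The main obstacle is to verify that $f_{j+1}$ never vanishes. Tracing the coefficient explicitly along the two moves, it factors as $f_j$ (nonzero by the inductive hypothesis) times a rational combination of the quantities $k_{(i,n,\cdot)},\, k_{(i,g)},\, s_{i+1}',\, k_{(i,n,\cdot)}^2+k_{(i,g)}^2$ and of $\sin\beta_{(i,\cdot)},\cos\beta_{(i,\cdot)}$ for $i\leq j+1$, multiplied by a transversality factor measuring the response of the exit angle $\beta_{(j+1,\cdot)}$ to a perturbation of the entrance angle $\beta_{(j,\cdot)}$ with the foot of the ruling fixed on $\bar{\gamma}_j$. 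By Lemma \ref{secondcurv} together with the proper-fold assumption, all quantities in the first group are nowhere zero; the transversality factor is handled separately by excluding the degenerate configurations in which varying $\beta_{(j,\cdot)}$ would produce a perfectly compensating variation of $\beta_{(j+1,\cdot)}$, a situation ruled out by the standing non-tangency assumption on rulings and the non-degeneracy of the foldline pair $\bar{\gamma}_j,\bar{\gamma}_{j+1}$. This closes the induction.
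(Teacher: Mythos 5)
Your overall strategy coincides with the paper's: induction with base case read off from Proposition \ref{formulona} (after expanding $s_2''$ via Lemma \ref{velocity} and Lemma \ref{rulingangle}), and an inductive step that locates the unique source of the top-order derivative $\tau_1^{(2j)}$ in the term $s_{j+1}''k_{(j+1,n,\bar S_{j+1})}k_{(j+1,g)}$, i.e.\ in $\beta_{(j,S_j)}''$ and hence in $\tau_{(j,r,S_j)}''$. Your bookkeeping ``one derivative from the propagation through $s_{j+1}'$, one more from the flip through $k_{(j+1,n,\bar S_{j+1})}'$'' is a correct and rather transparent way to account for the increment of two per ridge, and your placement of the fresh $k_1^{(2j)}$ inside $g_{j+1}$ (it enters through $g_j''$) is right.

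The one genuine soft spot is the non-vanishing of $f_{j+1}$, which is the whole point of the lemma. You describe the coefficient as containing a ``transversality factor measuring the response of the exit angle to a perturbation of the entrance angle'' and propose to exclude ``degenerate configurations'' by a ``non-degeneracy of the foldline pair'' --- but no such hypothesis exists, and no exclusion argument is needed: the coefficient of $\tau_{(j,r,S_j)}''$ is explicitly
$-\bar v\, k_{(j,n,S_j)}\,k_{(j+1,n,\bar S_{j+1})}\,k_{(j+1,g)}/\bigl(\tau_{(j,r,S_j)}^2+k_{(j,n,S_j)}^2\bigr)$
(up to the nonzero prefactor $2/\bigl((s_{j+1}')^2(k_{(j+1,n,\bar S_{j+1})}^2+k_{(j+1,g)}^2)\bigr)$ and powers of the reparametrization speeds), and every factor is nonzero. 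The decisive ingredient, which you never invoke --- not in the base case either, where you credit only $k_1\neq 0$ and non-tangency --- is $\bar v\neq 0$: the signed ruling-distance between consecutive ridges cannot vanish precisely because the foldlines are non-intersecting. Without this factor your $\partial_{\tau_1'}s_2'$ would be identically zero (see Lemma \ref{velocity}: $\bar v$ multiplies the only $\beta'$-dependent term), so citing $k_1\neq 0$ and non-tangency alone does not establish $f_2\neq 0$. Replace the vague transversality discussion by the explicit coefficient above and the argument closes exactly as in the paper.
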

\begin{proof}
We proceed by induction, the basis step being provided by Proposition \ref{formulona}, where $s_2''$ is rewritten making use of the expressions for $s_2'$ from Lemma \ref{velocity} and for $\beta'_{(1,S_1)}$ from Lemma \ref{rulingangle}. We employ a similar strategy to prove the inductive step, applying Proposition \ref{formulona} between ridges $j$ and $j+1$. The characterization for $k_{(j+1,n,S_{j+1})}=-k_{(j+1,n,\bar{S}_{j+1})}$ is easily obtained after observing that by Lemma \ref{propdelta} it depends only on $k_{(j,n,\bar{S}_{j})}$ and $\tau_{(j,r,\bar{S}_{j})}$ and their first derivative, and concluding by the inductive hypothesis. We look then at the term $s_{j+1}''k_{(j+1,n,\bar{S}_{j+1})}k_{(j+1,g)}$ containing the derivative of highest order of the expression for $\tau_{(j+1,r,\bar{S}_j)}$. Further decomposing $s_{j+1}''$, by Lemma \ref{velocity} and Lemma \ref{rulingangle} we end up looking at 
$$
-\tau_{(j,r,S_j)}'' \frac{\bar{v} k_{(j,n,S_j)} }{\tau_{(j,r,S_j)}^2+k_{(j,n,S_j)}^2} k_{(j+1,n,\bar{S}_{j+1})}k_{(j+1,g)},
$$
as the term of highest differential order in $\tau_{(j,r,S_j)}$,  where $\bar{v}$ is the distance function between $\gamma_j$ and $\gamma_{j+1}$ along the ruling. Again we conclude by induction after observing that the factor multiplying $\tau_{(j,r,S_j)}''$ is nonzero.
\end{proof}

\begin{proof}[Proof of Proposition \ref{pathologic}]
Without loss of generality in the proof argument, we assume $S_1=+$ and $N$ even with ${S}_{N-1}=\bar{S}_{N}=+$. 

For any $s_1$ and $M \in \mathbb{R}$, we can locally perturbate $\tau_1^{(2N-3)}$ to $\tilde{\tau}_1^{(2N-3)}$, for example with a very steep bump function, to have $\tilde{\tau}_1^{(2N-3)}(s_1)=M$, but still for any $\rho>0$, taking the antiderivatives of $\tilde{\tau}_1^{(2N-3)}$ with suitable boundary conditions 
$$
\max \abs{\tau_1^{(h)}-\tilde{\tau}_1^{(h)}}<\rho, \;\;\;\;\; h<2N-3,
$$
which is possible because we are constraining finitely many antiderivatives defined on compact domains. We define $\tilde{\gamma}_1$ as the ridge having $\tilde{k}_1:=k_1$ and $\tilde{\tau}_1$ as normal curvature and relative torsion and propagate the fold along $\bar{\gamma}_1$ onto such a ridge. By Lemmma \ref{formal} if $\rho$ is small enough, again by continuity and compactness, the normal curvature and the relative torsion of the new ridges $\tilde{\gamma}_1$ to $\tilde{\gamma}_{N-1}$ are arbitrarily close to those of the original ridges ${\gamma}_1$ to ${\gamma}_{N-1}$, which proves the second part from the claim of the theorem.

It remains to show that we can exploit the perturbation freedom we have on $\tilde{\tau}_1^{(2N-3)}$ to (heavily) modify the behaviour of the regression curve of the developable between the ridges $N-1$ and $N$. We do that by recalling that the distance of such a curve from the ridge along a ruling emanating from $\tilde{\gamma}_{N-1}$ is given by
$$
\frac{\sin(\tilde{\beta}_{(N-1,+)})}{\tilde{\beta}_{(N-1,+)}'+k_{(N-1,g)}} \;\; \text{and} \;\; \tilde{\beta}_{(N-1,+)}'=\frac{\tilde{\tau}_{(N-1,r,+)}'\tilde{k}_{(N-1,n,+)}-\tilde{k}_{(N-1,n,+)}'\tilde{\tau}_{(N-1,r,+)}}{\bigl(\tilde{\tau}_{(N-1,r,+)}^2+\tilde{k}_{(N-1,n,+)}^2\bigl)},
$$
where $\tilde{\beta}_{(N-1,+)}$ is the angle function between the ruling and the tangent $\tilde{\gamma}'_{N-1}$ and $k_{(N-1,g)}$ is the curvature of the respective foldline. By Lemma \ref{formal}, $\tilde{\tau}_{(N-1,r,+)}'$ can be made arbitrarily large/small while keeping all the other functions almost unchanged, and with that, since $\tilde{k}_{(N-1,n,+)}\neq 0$, the same behaviour translates to $\tilde{\beta}_{(N-1,+)}'$, hence forcing the point of the regression curve to be arbitrarily located along the ruling and preventing the isometry to be extended to the final foldline. 
\end{proof}

\section{Future work} \label{future}
The construction of \S \ref{secprop2} is artificial in the measure it forces the existence of finitely many folds by exploiting the local guarantees provided by the properness of the first one. It would be nice to see in future years an existence proof that would work on infinitely many uniformly rescaled foldlines. Proposition \ref{pathologic} makes it clear that such a proof would depend on the development of an inductive tool allowing a suitable control not only on the local propagation but also on the derivatives of arbitrary order of the curves involved.   

\section*{Acknowledgments}
The author acknowledges the support of the Austrian Science Fund (FWF): W1230, ``Doctoral Program Discrete Mathematics'' and of SFB-Transregio 109 ``Discretization in Geometry \& Dynamics'' funded by DFG and FWF (I2978).

The author would also like to thank Johannes Wallner for helpful discussions and the support in the use of POV-Ray. 


\bibliographystyle{IEEEtran}
\bibliography{IEEEabrv,bibliography}

\appendix

\section{Folding the annulus}

This appendix is devoted to an explicit application of the formulae obtained in \S \ref{secprop1} of the paper to the annulus folded along conentric circles. 

\begin{lemma} \label{lemmaappendix}
Let $\bar{\gamma}$ be a circle of radius $R$ and center in the origin traversed counter-clockwise and $r$ a unit vector forming with the tangent of the circle at $\bar{\gamma}(s)$ the angle $-\pi<\beta<+\pi$ (measured counter-clockwise). The signed distance $\bar{v}$ between $\bar{\gamma}(s)$ and the closest intersection point between the line $\bar{\gamma}(s)+v \cdot r$ and the scaled circle $(1+c)\bar{\gamma}$ with $c \in \mathbb{R}$, whenever well defined, obeys the formula  
$$
\bar{v}=R\big(\sin (\beta) - \sign\bigl(\sin (\beta)\bigl)  \sqrt{\sin(\beta)^2+c^2+2c}\big).
$$
Besides, the angle $\delta$ between $\bar{\gamma}'(s)$ and the tangent with the second circle at the intersection point satisfies
\begin{equation*}
\begin{split}
\sin(\delta)& =\frac{\bar{v}\cdot \cos(\beta)}{R \cdot (1+c)}, \\
\cos(\delta) & = \frac{R-\bar{v}\cdot \sin(\beta)}{R\cdot (1+c)}.
\end{split}
\end{equation*}
\end{lemma}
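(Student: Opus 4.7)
The plan is to exploit the rotational symmetry of the problem to reduce to an explicit coordinate computation, then analyze the resulting quadratic carefully to justify the compact form with $\sign(\sin(\beta))$.

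First, using rotational invariance, I would place $\bar{\gamma}(s)=(R,0)$ with tangent vector $\bar{\gamma}'(s)=(0,1)$. Rotating the tangent by $\beta$ counter-clockwise yields $r=(-\sin(\beta),\cos(\beta))$, so the line under consideration is $(R-v\sin(\beta),\,v\cos(\beta))$. Imposing that this point lies on the circle of radius $R(1+c)$ and expanding gives $v^2-2Rv\sin(\beta)-R^2(c^2+2c)=0$, whose two roots are $v_\pm=R\bigl(\sin(\beta)\pm\sqrt{\sin(\beta)^2+c^2+2c}\bigl)$.

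Next, I would identify which root is closest to the origin in absolute value (this is the signed distance $\bar{v}$), splitting into the cases $\sin(\beta)>0$ and $\sin(\beta)<0$. When $\sin(\beta)>0$, the root $v_-$ has smaller absolute value in both the $c>0$ and $c<0$ regimes (whenever the discriminant is nonnegative), matching the formula with $\sign(\sin(\beta))=+1$. When $\sin(\beta)<0$, a symmetric argument shows $v_+$ is closest and the prescribed formula with $\sign(\sin(\beta))=-1$ recovers the same expression. The case $\sin(\beta)=0$ is excluded implicitly, since the two intersection points would be symmetric across $\bar{\gamma}(s)$ and no unique closest one exists unless $c=0$. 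Collecting these cases yields the claimed formula for $\bar{v}$.

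For the tangent-angle relations, I would write the intersection point as $p=(R-\bar{v}\sin(\beta),\,\bar{v}\cos(\beta))$ and, because $p$ lies on the circle of radius $R(1+c)$, parametrize it as $p=R(1+c)(\cos(\theta),\sin(\theta))$. Reading off the two coordinates gives $\cos(\theta)=(R-\bar{v}\sin(\beta))/(R(1+c))$ and $\sin(\theta)=\bar{v}\cos(\beta)/(R(1+c))$. The counter-clockwise unit tangent at $p$ on the scaled circle is $(-\sin(\theta),\cos(\theta))$, which is precisely the image of $(0,1)=\bar{\gamma}'(s)$ rotated counter-clockwise by $\theta$; hence $\delta=\theta$, and the two formulae for $\sin(\delta)$ and $\cos(\delta)$ follow immediately.

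The only nontrivial step is the sign bookkeeping in selecting the closest root, which I expect to be the main (albeit mild) obstacle: the two regimes $c>0$ and $c<0$ must both be checked, and the compact expression with $\sign(\sin(\beta))$ must be verified to encode both sub-cases correctly. Everything else reduces to a single quadratic and an identification of angles on a circle.
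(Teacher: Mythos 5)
Your computation is correct and is exactly the "elementary computation" the paper's one-line proof alludes to: reduce by rotational symmetry to the point $(R,0)$, solve the resulting quadratic $v^2-2Rv\sin(\beta)-R^2(c^2+2c)=0$, select the root of smaller absolute value via the sign of $\sin(\beta)$, and read off $\delta$ as the polar angle of the intersection point. The sign bookkeeping you flag is handled correctly (for $\sin(\beta)>0$ one has $\abs{\sin(\beta)-D}\leq\abs{\sin(\beta)+D}$ for any $D\geq 0$, and symmetrically for $\sin(\beta)<0$), so nothing is missing.
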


\begin{proof}
The lemma follows from elementary computations.
\end{proof}

Given a sequence of concentric circles of radius $R_j$ and a ridge suitable to fold along the $\bar{j}$-th one, formulae from \S \ref{local}, \S \ref{secprop1} and Lemma \ref{lemmaappendix} can be iterated to compute explicit parametrizations of the developables involved in the propagated curved fold. In the notation of \S \ref{secprop1}, two conditions must be met to guarantee the regularity of such surfaces.
\begin{itemize}
\item Setting $c_{(j,\pm)}=R_{j \pm 1}/R_j - 1$, the function
$$
\bar{v}_{(j,S_j)}=R_j\big(\sin (\beta_{(j,S_j)}) - \sign\bigl(\sin (\beta_{(j,S_j)})\bigl)  \sqrt{\sin(\beta_{(j,S_j)})^2+c_{(j,\pm)}^2+2c_{(j,\pm)}}\big)
$$
must be well defined, meaning the ruling intersects the next foldline. 
\item Let $d_{(j,S_j)}=\sin(\beta_{(j,S_j)})/(\beta_{(j,S_j)}'+k_{(j,g)})$ be the signed distance of the regression curve along the ruling, then either 
$$
\sign \bigl( d_{(j,S_j)} \cdot \bar{v}_{(j,S_j)} \bigl) < 0 \text{ or }
\abs{\bar{v}_{(j,S_j)}} \geq \abs{d_{(j,S_j)}},
$$
meaning the regression curve is not intersected before the ruling reaches the next foldline. 
\end{itemize}

The regularity of the two folds represented in Fig.\ \ref{hypar3povray}, \ref{torus4povray} is guaranteed by comparing the values of $\bar{v}_{(j,S_j)}$ and $d_{(j,S_j)}$ computed with the Mathematica code available at \cite{github} (Fig.\ \ref{regVSvHypar}, \ref{regVSvTorus}). For the intersection between the unit sphere and the hyperbolic paraboloid $z-3xy=0$ we provide the plots for one of the four arcs equivalent up to reflection, which can be parametrized as  
$$
\Bigg( t, \sqrt{\frac{1-t^2}{1+9t^2}}, 3 t \sqrt{\frac{1-t^2}{1+9t^2}} \Bigg) \text{ on } \bigg[-\sqrt{(-1+\sqrt{10})/9},\sqrt{(-1+\sqrt{10})/9} \; \bigg].
$$
For the toroidal curve we use the parametrization from \S \ref{localconvex} and restrict the plot to $[0,2\pi]$, which corresponds to one of the five arcs equivalent up to rotation. In both cases the parametrizations are not arc-length and rather than scaling down the starting ridge to match the length of the unit circle, equivalently to our purposes we have scaled up the unit circle (and accordingly all the concentric foldlines) to match the length of the ridge. It is worth mentioning that in the hyperbolic paraboloid case, although the ruling structure of the outer strip is rather well-behaved, it is not possible to further propagate the folding to an additional strip of the same width, since some of the rulings would cross the regression curve of the induced developable before they could reach the new outer boundary (Fig.\ \ref{badregHypar}).

The plots of the normal curvature and those of the relative torsion of the ridges are also provided (Fig.\ \ref{curvtorsHypar}, \ref{curvtorsTorus}).

\begin{figure} 
\begin{tikzpicture}
          \node[anchor=south west,inner sep=0] at (0,0) {\scalebox{1}{\includegraphics[width=0.9\textwidth]{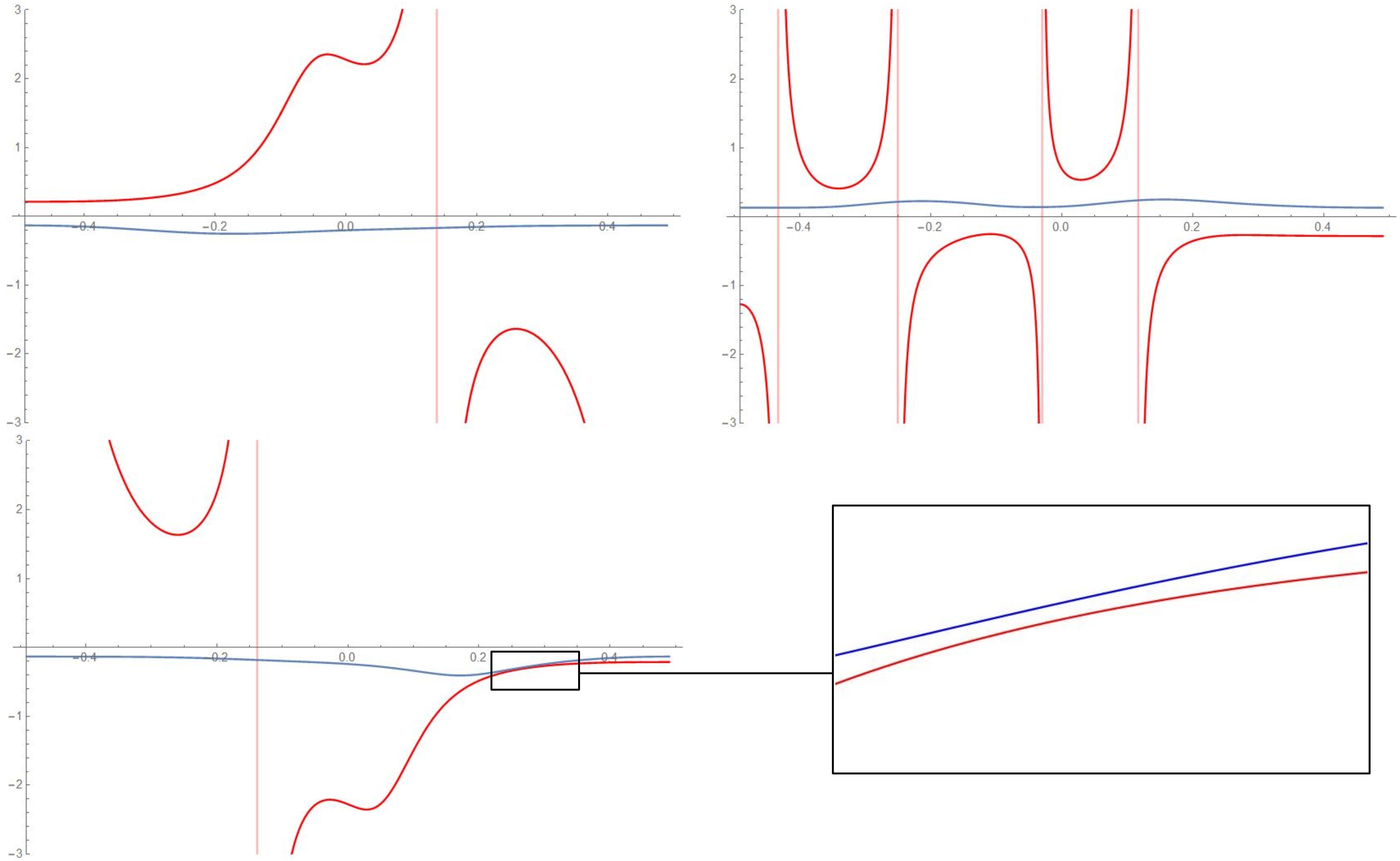}}};
    
    \pgfmathsetmacro{\SCALE}{0.9} 

    \node at ($\SCALE*(2.5,5.4)$) {$\bar{v}_{(1,+)}$};
    \node at ($\SCALE*(1.5,6.4)$) {$d_{(1,+)}$};
    
    \node at ($\SCALE*(11.1,6.2)$) {$\bar{v}_{(2,-)}$};
    \node at ($\SCALE*(11.5,5.3)$) {$d_{(2,-)}$};
    
    \node at ($\SCALE*(3.0,1.5)$) {$\bar{v}_{(1,-)}$};
    \node at ($\SCALE*(1.5,2.7)$) {$d_{(1,-)}$};
    
\end{tikzpicture}
\caption{For the three developables of Fig.\ \ref{torus4povray}, comparison of the signed distance along the ruling between concentric circles $d_{(j,S)}$ and between the ridge and the regression curve $\bar{v}_{(j,S)}$. The surfaces are regular since the rulings reach the next foldline without first crossing the regression curve.} \label{regVSvHypar}
\end{figure}

\begin{SCfigure}
\caption{The isometry of a possible third outer strip induced by the propagation would turn singular before reaching the boundary.}\label{badregHypar} 
\scalebox{0.9}{
  \begin{tikzpicture}
          \node[anchor=south west,inner sep=0] at (0,0) {\scalebox{0.53}{\includegraphics[width=\textwidth]{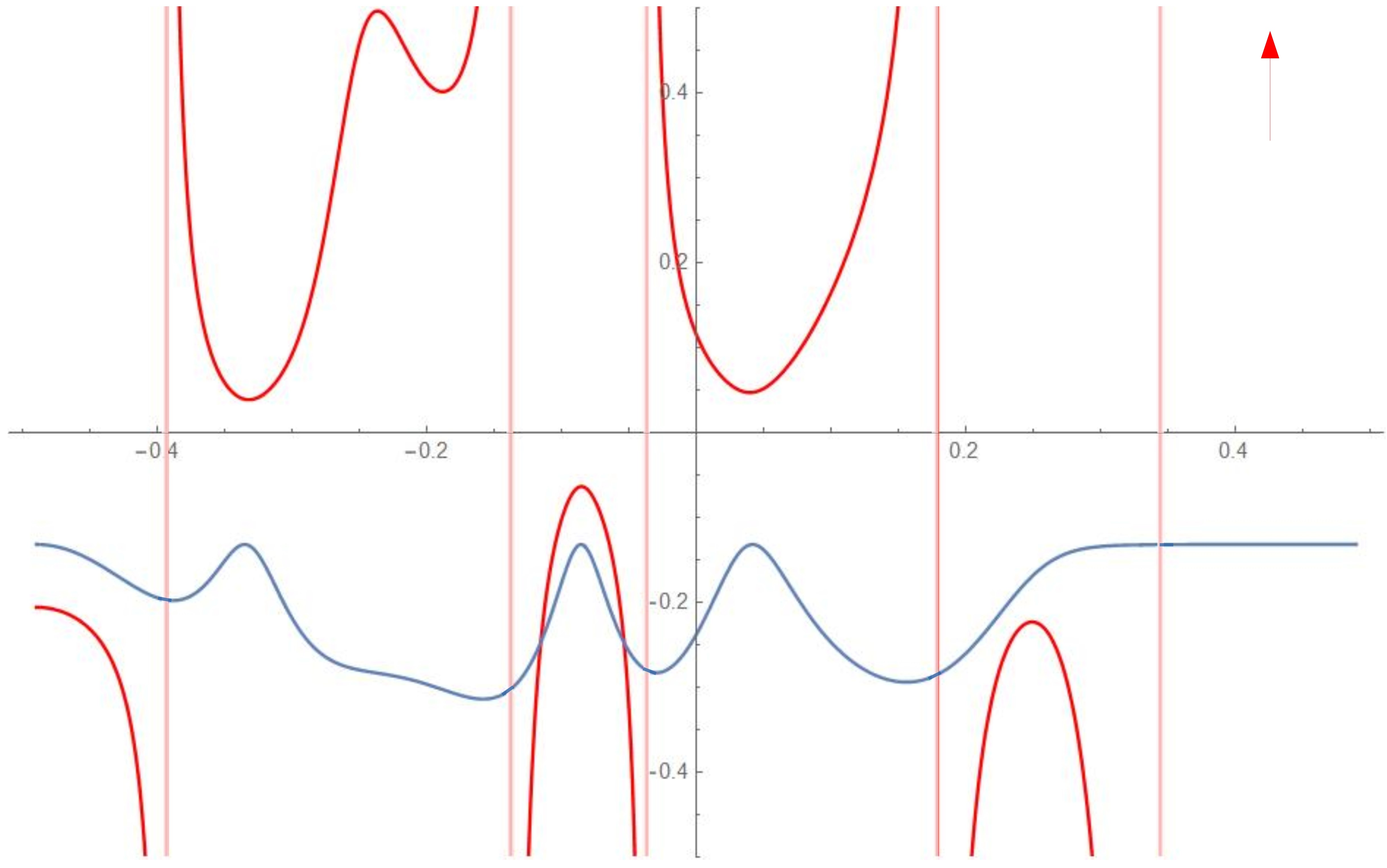}}};
          
      \pgfmathsetmacro{\SCALE}{0.97} 

    \node at ($\SCALE*(6.0,1.9)$) {$\bar{v}_{(3,+)}$};
    \node at ($\SCALE*(2.0,2.6)$) {$d_{(3,+)}$};
    
\end{tikzpicture}}
\end{SCfigure}

\begin{figure} 
\begin{tikzpicture}
          \node[anchor=south west,inner sep=0] at (0,0) {\scalebox{0.9}{\includegraphics[width=\textwidth]{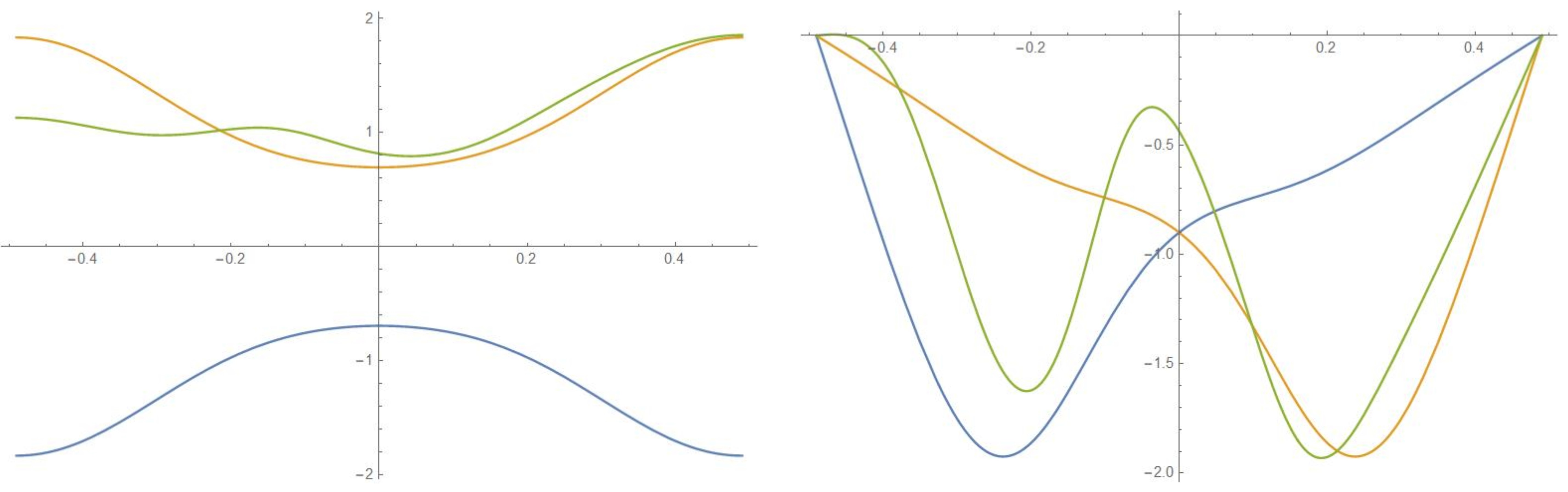}}};
    
    \pgfmathsetmacro{\SCALE}{0.9} 
    
    \node at ($\SCALE*(1.3,0.2)$) {$k_{(1,n,+)}$};
    \node at ($\SCALE*(1.3,3.7)$) {$k_{(1,n,-)}$};
    \node at ($\SCALE*(0.6,2.7)$) {$k_{(2,n,-)}$};
    
     \node at ($\SCALE*(7.2,0.4)$) {$\tau_{(1,r,+)}$};
    \node at ($\SCALE*(10.2,3)$) {$\tau_{(2,r,-)}$};
    \node at ($\SCALE*(8.2,3.1)$) {$\tau_{(1,r,-)}$};
    
\end{tikzpicture}
\caption{In the notation from \S \ref{secprop1}, normal curvature and relative torsion of the two ridges in the fold of Fig.\ \ref{hypar3povray}. Note that $k_{(1,n,+)}$, $\tau_{(1,r,+)}$  and $k_{(1,n,-)}$, $\tau_{(1,r,-)}$ are  respectively the normal curvature and relative torsion of the first ridge (the one from which the propagation starts) w.r.t. the developables on its two sides.} \label{curvtorsHypar}
\end{figure}

\begin{figure}
\begin{tikzpicture}
          \node[anchor=south west,inner sep=0] at (0,0) {\scalebox{0.9}{\includegraphics[width=\textwidth]{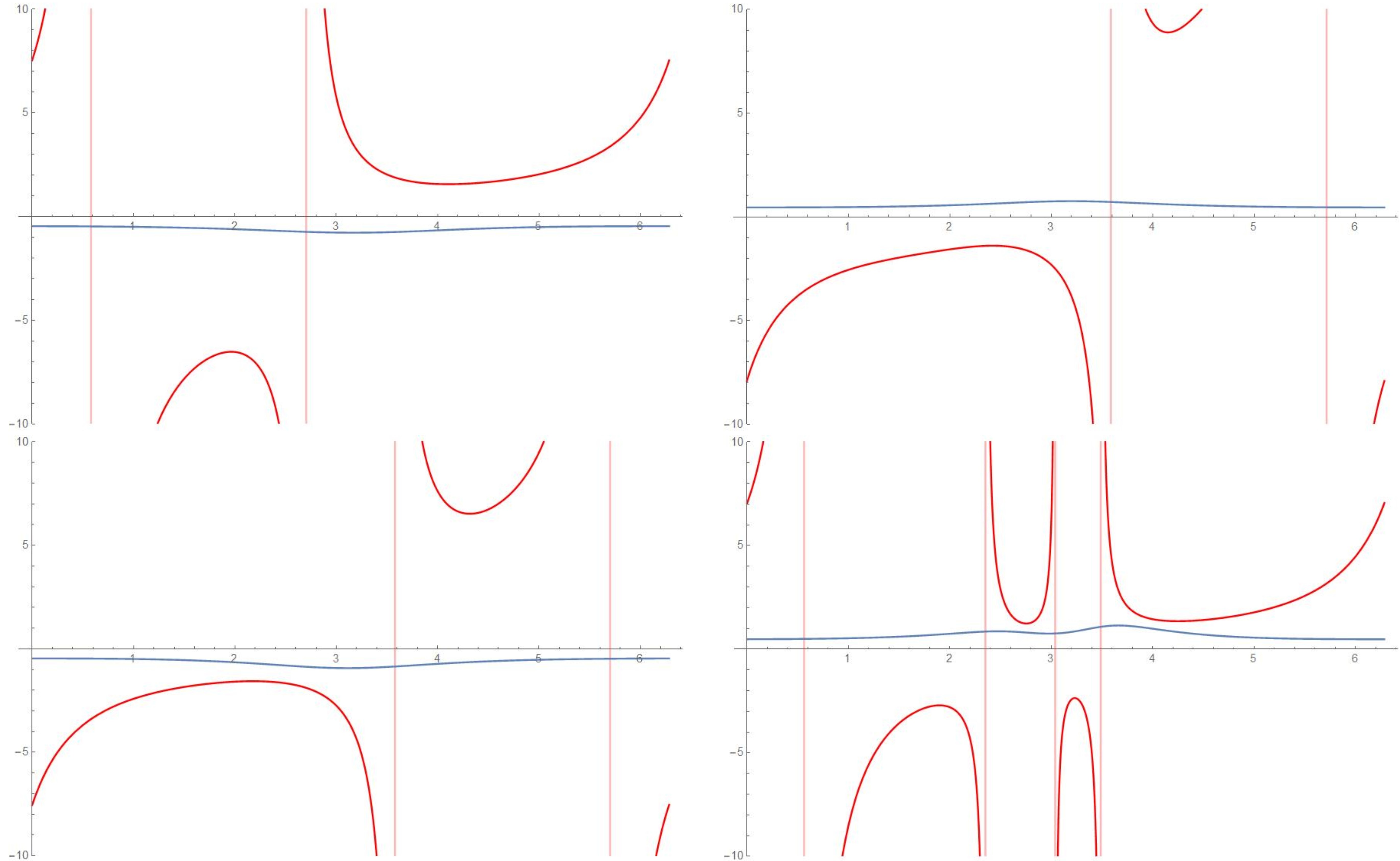}}};
    
    \pgfmathsetmacro{\SCALE}{0.9} 

    \node at ($\SCALE*(4.0,6.4)$) {$\bar{v}_{(1,+)}$};
    \node at ($\SCALE*(4.5,5.4)$) {$d_{(1,+)}$};
    \node at ($\SCALE*(8.9,6.2)$) {$d_{(2,-)}$};
    \node at ($\SCALE*(7.7,4.9)$) {$\bar{v}_{(2,-)}$};
    
    \node at ($\SCALE*(4.3,2.9)$) {$\bar{v}_{(1,-)}$};
    \node at ($\SCALE*(4.5,1.5)$) {$d_{(1,-)}$};
    \node at ($\SCALE*(10.7,2.45)$) {$\bar{v}_{(0,+)}$};
    \node at ($\SCALE*(8.0,2.3)$) {$d_{(0,+)}$};
    
\end{tikzpicture}
\caption{For the four developables of Fig.\ \ref{torus4povray}, comparison of the signed distance along the ruling between concentric circles $d_{(j,S)}$ and between the ridge and the regression curve $\bar{v}_{(j,S)}$. The surfaces are regular since the rulings reach the next foldline without first crossing the regression curve. We refer to the inner ridge as the $0$-th one.} \label{regVSvTorus}
\end{figure}

\begin{figure}
\begin{tikzpicture}
          \node[anchor=south west,inner sep=0] at (0,0) {\scalebox{1}{\includegraphics[width=\textwidth]{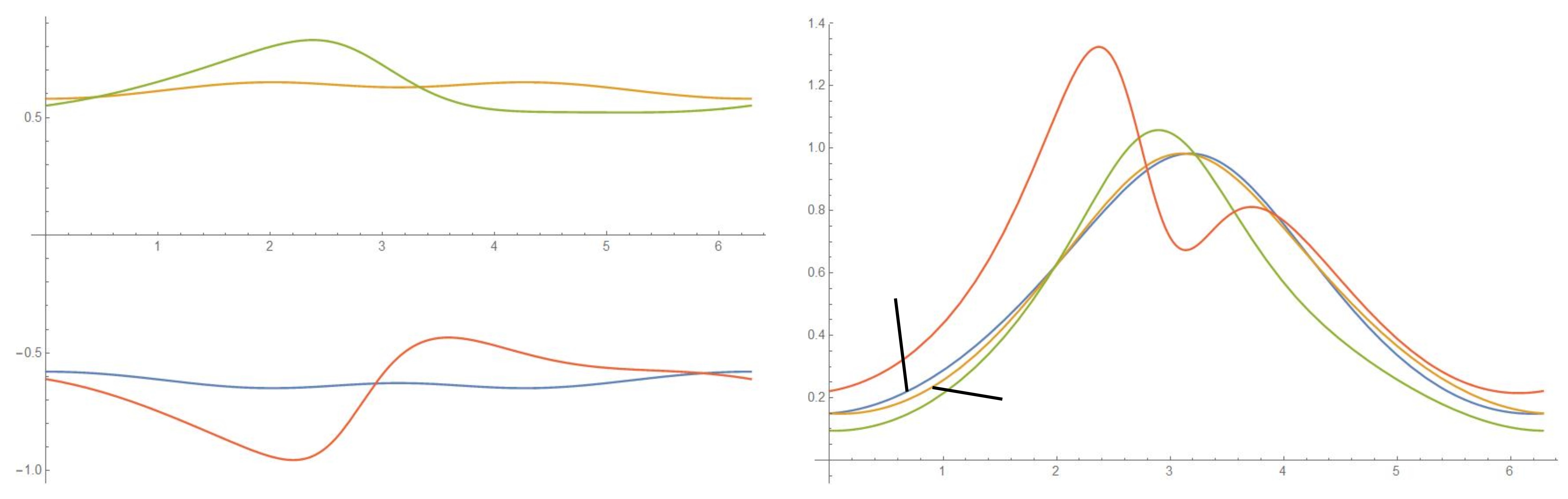}}};
    
    \pgfmathsetmacro{\SCALE}{1} 
    
    \node at ($\SCALE*(1.4,1.25)$) {$k_{(1,n,+)}$};
    \node at ($\SCALE*(2.3,3.1)$) {$k_{(1,n,-)}$};
    \node at ($\SCALE*(3.55,3.7)$) {$k_{(2,n,-)}$};
    \node at ($\SCALE*(3.4,0.4)$) {$k_{(0,n,+)}$};
    
     \node at ($\SCALE*(7.3,1.8)$) {$\tau_{(1,r,+)}$};
    \node at ($\SCALE*(10.45,1.0)$) {$\tau_{(2,r,-)}$};
    \node at ($\SCALE*(8.7,0.8)$) {$\tau_{(1,r,-)}$};
    \node at ($\SCALE*(7.95,3.1)$) {$\tau_{(0,r,+)}$};
    
\end{tikzpicture}
\caption{In the notation from \S \ref{secprop1}, normal curvature and relative torsion of the three ridges in the fold of Fig.\ \ref{torus4povray}. Note that $k_{(1,n,+)}$, $\tau_{(1,r,+)}$  and $k_{(1,n,-)}$, $\tau_{(1,r,-)}$ are respectively the normal curvature and relative torsion of the first ridge (the one from which the propagation starts) w.r.t. the developables on its two sides. We refer to the inner ridge as the $0$-th one.} \label{curvtorsTorus}
\end{figure}

\hfill

\end{document}